\definecolor{dark-gray}{gray}{0.3}
\definecolor{dkgray}{rgb}{.4,.4,.4}
\definecolor{dkblue}{rgb}{0,0,.5}
\definecolor{medblue}{rgb}{0,0,.75}
\definecolor{rust}{rgb}{0.5,0.1,0.1}
\newtheorem{theorem}{Theorem}[section]
\newtheorem{proposition}[theorem]{Proposition}
\newtheorem{fact}[theorem]{Fact}
\newtheorem{claim}[theorem]{Claim}
\theoremstyle{definition}
\newtheorem{definition}[theorem]{Definition}
\newtheorem{remark}[theorem]{Remark}
\newcommand{\term}{\emph}
\numberwithin{equation}{section} 
\numberwithin{figure}{section}
\numberwithin{table}{section}
\numberwithin{recipe}{section}
\providecommand{\mathbold}[1]{\bm{#1}}
\renewcommand{\phi}{\varphi}
\newcommand{\cnst}[1]{\mathrm{#1}} 
\newcommand{\econst}{\mathrm{e}}
\newcommand{\Id}{\mathbf{I}}
\newcommand{\coll}[1]{\mathscr{#1}}
\providecommand{\mathbbm}{\mathbb} 
\newcommand{\R}{\mathbbm{R}}
\newcommand{\Sym}{\mathbb{S}}
\newcommand{\Prob}[1]{\mathbbm{P}\left\{{#1}\right\}}
\newcommand{\Expect}{\operatorname{\mathbb{E}}}
\newcommand{\vct}[1]{\mathbold{#1}}
\newcommand{\mtx}[1]{\mathbold{#1}}
\newcommand{\adj}{\mathsf{t}}
\newcommand{\lspan}[1]{\operatorname{span}{#1}}
\newcommand{\diag}{\operatorname{diag}}
\newcommand{\trace}{\operatorname{trace}}
\newcommand{\ip}[2]{\left\langle {#1},\ {#2} \right\rangle}
\newcommand{\norm}[1]{\left\Vert {#1} \right\Vert}
\newcommand{\normsq}[1]{\norm{#1}^2}
\newcommand{\conv}{\operatorname{conv}}
\newcommand{\maximize}{\text{maximize}\quad}
\newcommand{\subjto}{\quad\text{subject to}\quad}
\title{Simplicial Faces of the Set of Correlation Matrices}
\author[J.~A.~Tropp]{Joel~A.~Tropp}
\date{5 December 2016.  Revised 10 November 2017.}
\subjclass[2010]{Primary: 52A20, 15B48. Secondary: 52B12, 90C27.}
\keywords{Correlation matrix, cut polytope, elliptope, face, matrix concentration, probabilistic method}
\begin{document}

\begin{abstract}
This paper concerns the facial geometry of the set of $n \times n$
correlation matrices.  The main result states that almost every set
of $r$ vertices generates a simplicial face, provided that $r \leq \sqrt{\cnst{c} n}$,
where $\cnst{c}$ is an absolute constant.  This bound is qualitatively sharp because
the set of correlation matrices has no simplicial face generated
by more than $\sqrt{2n}$ vertices.
\end{abstract}

\maketitle

\section{Motivation}

A \term{correlation matrix} is a positive-semidefinite (psd) matrix
whose diagonal entries are identically equal to one.
The set of all correlation matrices with a fixed dimension
is called the \term{elliptope}.  As we will explain,
the elliptope arises naturally in combinatorial optimization
as an approximation to the \term{cut polytope}.
Motivated by this application, we may ask
how well the elliptope approximates the cut polytope.
In particular, it is valuable to understand
what faces the elliptope and the cut polytope have in common.
The purpose of this paper is to investigate this question.
We will demonstrate that the elliptope
and the cut polytope share an enormous
number of low-dimensional simplicial faces.

\subsection{Graphs and Cuts}

Let $G := (V, E)$ be an undirected graph with vertex set $V = \{1, \dots, n\}$.
To each subset $S$ of vertices, we associate the vector $\vct{c}_S \in \R^n$ whose entries are given by
\begin{equation} \label{eqn:cut-vector}
(\vct{c}_S)_i := \begin{cases} +1, & i \in S; \\ -1, & i \in \bar{S}. \end{cases}
\end{equation}
We have written $\bar{S} := V \setminus S$ for the set complement.
The \term{Laplacian} of the graph is the $n \times n$ psd matrix
\begin{equation} \label{eqn:laplacian}
\mtx{L}
	:= \mtx{L}_G
	:= \frac{1}{4} \sum\limits_{\{i, j\} \in E} (\mathbf{e}_i - \mathbf{e}_j) ( \mathbf{e}_i - \mathbf{e}_j)^\adj
\end{equation}
where the vector $\mathbf{e}_i \in \R^n$ has a one in the $i$th coordinate and zeros elsewhere.
For a subset $S$ of vertices, we can easily evaluate the quadratic form defined by the Laplacian $\mtx{L}$
at the vector $\vct{c}_S$:
\begin{equation} \label{eqn:dirichlet-form}
{\vct{c}_S}^\adj \mtx{L} \vct{c}_S
	= \frac{1}{4} \sum\limits_{\{i,j\} \in E} \big((\vct{c}_S)_i - (\vct{c}_S)_j \big)^2
	= \#\big\{ (i, j) \in S \times \bar{S} : \{ i, j \} \in E \big\}.
\end{equation}
In words, the value ${\vct{c}_S}^\adj \mtx{L} \vct{c}_S$ of the quadratic form
equals the number of edges that connect $S$ and its complement $\bar{S}$,
which is called the \term{weight} of the graph cut induced by $S$.

\subsection{Combinatorial and Semidefinite Formulations of the Maximum Cut}

This discussion suggests that we can use a mathematical program to optimize
the weight of a cut.  Let us introduce some definitions.

\begin{definition}[Cuts] Let $n$ be a natural number.  An $n$-dimensional \term{cut vector} is a member of
set $\{ \pm 1 \}^n$.  An $n \times n$ \emph{cut matrix} takes the form
$\vct{cc}^\adj$ where $\vct{c}$ is an $n$-dimensional cut vector.
The \emph{cut polytope} $\coll{C}_n$ is the convex hull of the $n \times n$ cut matrices:
\begin{equation} \label{eqn:cut-polytope}
\coll{C}_n := \conv\big\{ \vct{cc}^\adj : \vct{c} \in \{\pm 1\}^n \big\}.
\end{equation}
These objects are sometimes called \term{signed} cut vectors, matrices, and polytopes.
\end{definition}

Let $\mtx{A}$ be an $n \times n$ real psd matrix,
and consider three equivalent mathematical programs:
\begin{equation} \label{eqn:maxcut}
\begin{aligned}
\maximize & \vct{x}^\adj \mtx{A} \vct{x}
&&\subjto\quad
\text{$\vct{x}$ is a cut vector.} \\
\maximize & \trace(\mtx{A}\mtx{X})
&&\subjto\quad
\text{$\mtx{X}$ is a cut matrix.} \\
\maximize & \trace( \mtx{A} \mtx{X} )
&&\subjto\quad
\text{$\mtx{X} \in \coll{C}_n$.} 
\end{aligned}
\end{equation}
To see that the first two are equivalent,
write $\mtx{X} = \vct{xx}^\adj$ for a cut vector $\vct{x}$
and cycle the trace.
As for the third,
the construction~\eqref{eqn:cut-polytope} implies that
the extreme points of the cut polytope $\coll{C}_n$
are precisely the $n \times n$ cut matrices.
Therefore, the third program attains its
optimal value at a cut matrix.

In view of~\eqref{eqn:dirichlet-form},
we can try to find the maximum weight of a cut in the graph $G$ by
solving~\eqref{eqn:maxcut} with $\mtx{A} = \mtx{L}_G$,
where $\mtx{L}_G$ is the graph Laplacian~\eqref{eqn:laplacian}.
Finding the maximum weight of a cut in a general graph is
\textsf{NP}-hard~\cite{Kar72:Reducibility-Combinatorial},
so we cannot accomplish this task with a polynomial-time algorithm
unless $\mathsf{P} = \mathsf{NP}$.

One remedy is to relax~\eqref{eqn:maxcut} to reach
a tractable computational problem.  To do so, notice that each
cut matrix is a real psd matrix whose diagonal
entries are equal to one.  This motivates another definition.

\begin{definition}[Elliptope]
Let $n$ be a natural number.  The \term{elliptope} $\coll{E}_n$ is the convex set
\begin{equation} \label{eqn:elliptope}
\coll{E}_n := \big\{ \mtx{X} \in \Sym_+^n : \diag(\mtx{X}) = \mathbf{1} \big\}.
\end{equation}
The set $\Sym_+^n$ comprises the $n \times n$ real psd matrices,
and each entry of the vector $\vct{1} \in \R^n$ equals one.
The members of the elliptope are called \term{correlation matrices}.
\end{definition}

The elliptope $\coll{E}_n$ contains every $n \times n$ cut matrix,
so it also contains the cut polytope $\coll{C}_n$.  
Therefore, we may attempt to approximate
the value of~\eqref{eqn:maxcut} by means
of the semidefinite programming problem
\begin{equation} \label{eqn:maxcut-sdp}
\maximize \trace( \mtx{A} \mtx{X} )
\quad\subjto\quad
\text{$\mtx{X} \in \coll{E}_n$.}
\end{equation}
The elliptope $\coll{E}_n$ is an affine slice of the psd cone $\Sym_+^n$,
so there are polynomial-time algorithms for
completing the optimization~\eqref{eqn:maxcut-sdp}
to a fixed accuracy in the real arithmetic model~\cite[Lec.~5]{BN01:Lectures-Modern}.

\subsection{Analysis of the Semidefinite Relaxation}

How well does the tractable formulation~\eqref{eqn:maxcut-sdp} work?
For psd $\mtx{A}$,
a randomized rounding argument~\cite{Nes98:Semidefinite-Relaxation}
shows that the optimal values of~\eqref{eqn:maxcut} and~\eqref{eqn:maxcut-sdp} satisfy
\begin{equation} \label{eqn:maxcut-approx}
\frac{2}{\pi} \cdot \operatorname{val} \eqref{eqn:maxcut-sdp}
	\leq \operatorname{val} \eqref{eqn:maxcut}
	\leq \operatorname{val} \eqref{eqn:maxcut-sdp}.
\end{equation}
The constant $2/\pi$ cannot be improved.
But it has been observed empirically that
the value of ~\eqref{eqn:maxcut-sdp} is
usually within a few percent of the value of~\eqref{eqn:maxcut}.  For example,
see~\cite{DP93:Performance-Eigenvalue,PR95:Solving-Max-Cut,GW95:Improved-Approximation,MT11:Two-Proposals}.

Results like~\eqref{eqn:maxcut-approx}
are often attributed to Goemans \& Williamson~\cite{GW95:Improved-Approximation}
or to Nesterov~\cite{Nes98:Semidefinite-Relaxation}, but the provenance is longer.
Indeed, the bound~\eqref{eqn:maxcut-approx}
is equivalent to the ``little'' Grothendieck theorem~\cite[Thm.~4]{Gro53:Resume-Theorie}.
See the surveys~\cite{KN12:Grothendieck-Type-Inequalities,Pis12:Grothendiecks-Theorem}
for a modern introduction to Grothendieck's work.

\begin{figure}[t]
\begin{center}
\includegraphics[width=0.48\textwidth]{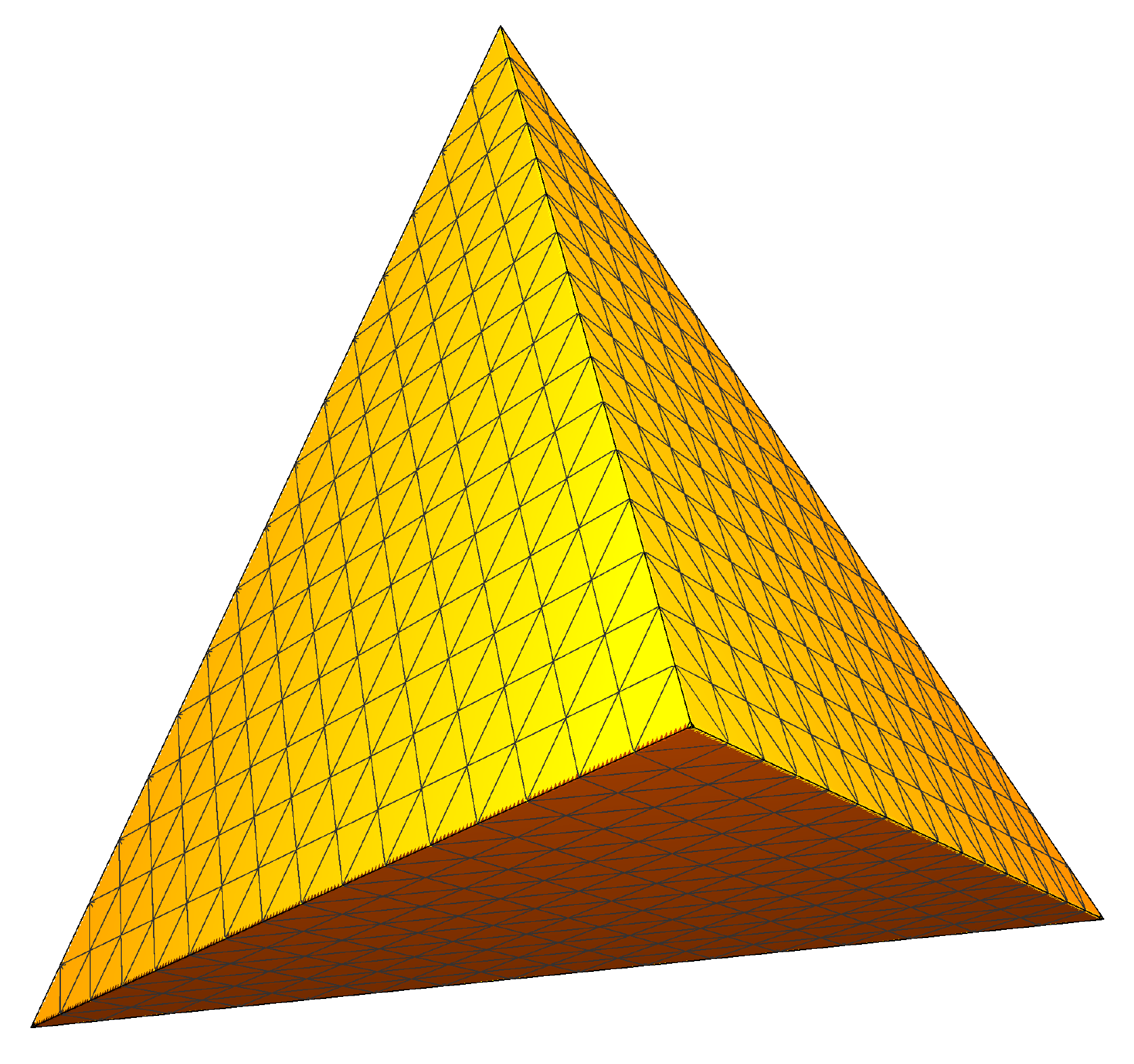} \hfill
\includegraphics[width=0.48\textwidth]{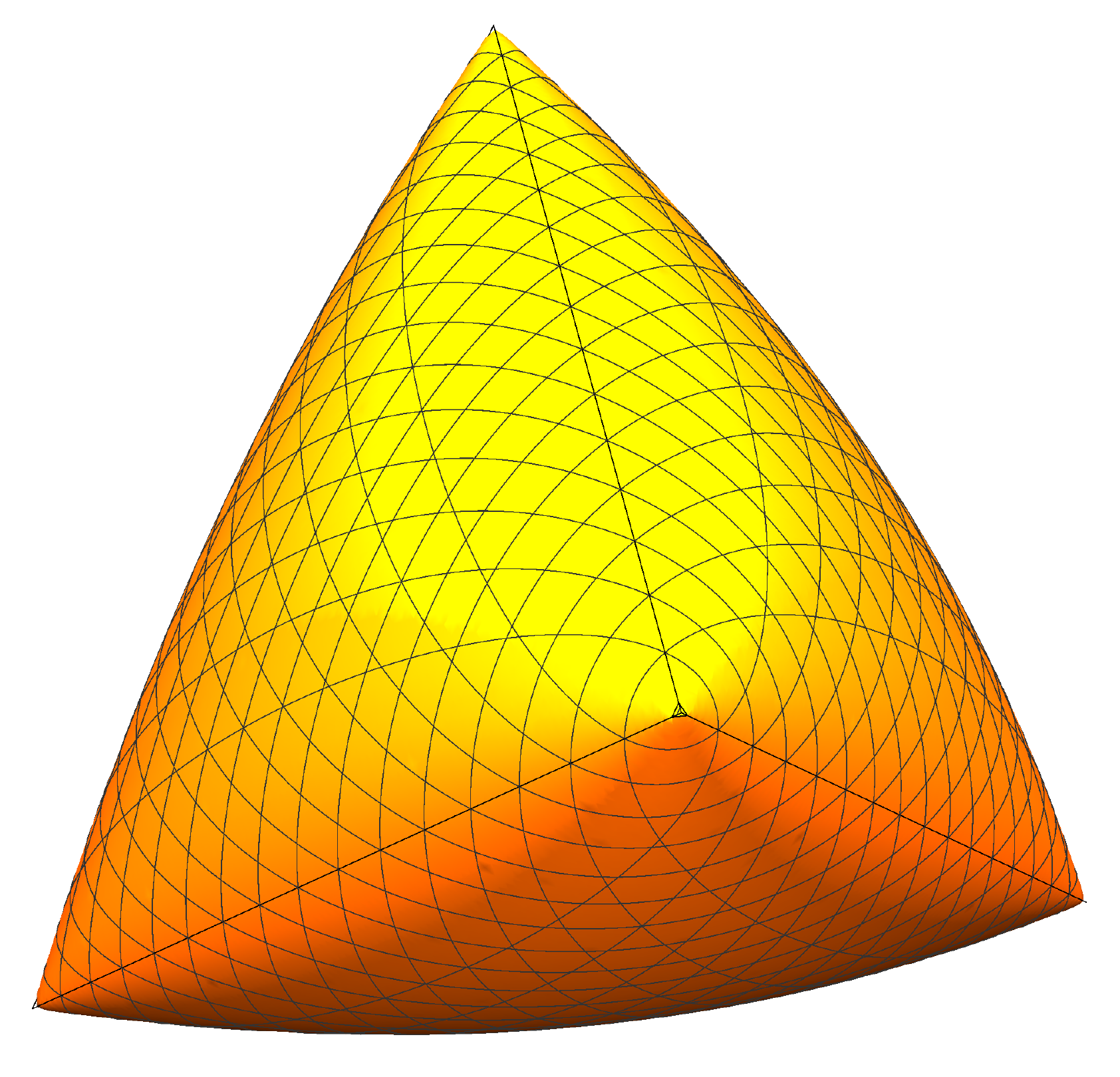}
\caption{\textsl{The Cut Polytope and the Elliptope.}
Define the linear map $\mtx{T} : \Sym^3 \to \R^3$ that extracts
the strict lower triangle of a $3 \times 3$ symmetric matrix
as a 3-dimensional vector.
\textbf{[left]} The image $\mtx{T}(\coll{C}_3)$ of the cut polytope $\coll{C}_3$.
The closest vertex is the image of the cut matrix $\vct{11}^\adj \in \Sym^3_+$.
\textbf{[right]} The image $\mtx{T}(\coll{E}_3)$ of the elliptope $\coll{E}_3$,
seen from the same vantage.}
\label{fig:elliptope}
\end{center}
\end{figure}

\subsection{Facial Geometry of the Elliptope and Optimization}

From the bound~\eqref{eqn:maxcut-approx}, we learn that the elliptope $\coll{E}_n$ is
a uniformly good relaxation of the cut polytope $\coll{C}_n$.
Yet the elliptope approximates the cut polytope
far more accurately than~\eqref{eqn:maxcut-approx} suggests.

Indeed, it is fruitful to think of the elliptope as a
``shrink-wrapped'' cut polytope.
By this, we mean that the elliptope $\coll{E}_n$ adheres to low-dimensional faces of the cut polytope $\coll{C}_n$,
while it curves away from the higher-dimensional faces.
Figure~\ref{fig:elliptope} illustrates this claim
for $\coll{C}_3$ and $\coll{E}_3$.
We see that both $\coll{C}_3$ and $\coll{E}_3$ have the same vertices and edges,
while the facets of $\coll{C}_3$ are not preserved
in the relaxation $\coll{E}_3$.

This paper demonstrates that a similar phenomenon holds more broadly.
We will prove that many low-dimensional simplicial faces of the cut polytope $\coll{C}_n$ are also faces of the elliptope $\coll{E}_n$.

This geometric observation provides a heuristic understanding of why the semidefinite
relaxation~\eqref{eqn:maxcut-sdp} often performs better than~\eqref{eqn:maxcut-approx} suggests.

\subsection{Notation}

We write $\norm{\cdot}$ for the Euclidean norm on $\R^d$.
The standard basis vector $\mathbf{e}_i$ has a one in the
$i$th coordinate and zeros elsewhere.  The symbol $\vct{1}$
refers to a vector whose entries are identically equal to
one.  The dimensions of these special vectors are
determined by context.
The notation ${}^\adj$ represents the transpose of a vector.
We frequently use the componentwise product $\odot$ of two vectors,
which is also known as the Hadamard or Schur product.

The set $\Sym^n$ contains the $n \times n$ real symmetric matrices,
and $\Sym_+^n$ is the subset of $n \times n$ real psd matrices.
The map $\lambda_{\min} : \Sym^n \to \R$ computes
the smallest eigenvalue of a symmetric matrix.
The letter $\Id$ refers to the identity matrix,
and the letter $\mathbf{J} := \vct{11}^\adj$ denotes a square matrix of ones.
The dimensions of these special matrices are determined by context. The symbol $\vee$ refers to the symmetric tensor product
of vectors or matrices; see~\cite[Chap.~I]{Bha97:Matrix-Analysis}
for an overview of multilinear algebra.

The operator $\mathbb{P}$ returns the probability of an event,
while $\Expect$ computes the expectation of a random variable,
a random vector, or a random matrix.  The abbreviation \term{iid}
means independent and identically distributed.
Small capitals (e.g., \textsc{SBern})
are used for the names of probability distributions.
The symbol $\sim$ means ``has the distribution.''

\section{Background and Results}

This section outlines our results
on the facial structure of the elliptope~\eqref{eqn:elliptope}.
We begin with a review of the definition of a simplicial face of a convex set,
and we summarize known results about the simplicial faces of the elliptope.
Next, we describe a random model that generates candidates for simplicial faces.
The main results delineate situations where this construction is likely to be successful.

\subsection{Facial Geometry of Convex Sets}

We begin with a reminder about some relevant definitions from convex geometry.
For further background, see~\cite{Roc70:Convex-Analysis,HL01:Fundamentals-Convex}.

\begin{definition}[Dimension]
Let $K$ be a convex set in $\R^d$.
The \term{dimension} of $K$ is defined as the dimension of the affine hull of $K$.
\end{definition}

\begin{definition}[Face]
Let $K$ be a convex set in $\R^d$.
A \term{face} $F$ of $K$ is a convex subset of $K$ for which
$$
\vct{x}, \vct{y} \in K
\quad\text{and}\quad
\text{$\theta \vct{x} + (1-\theta)\vct{y} \in F$
for some $\theta \in (0, 1)$}
\quad\text{imply that}\quad
\vct{x}, \vct{y} \in F.
$$
A face is also called an \term{extreme set}.
A 0-dimensional face is commonly called an \term{extreme point}.
\end{definition}

\begin{definition}[Simplicial Face]
A $k$-dimensional face $F$ of a convex set is \term{simplicial}
if $F$ is the convex hull of an affinely independent family
of $k + 1$ points.
\end{definition}

\begin{definition}[Vertex]
Let $K$ be a convex set in $\R^d$.  A point $\vct{x} \in K$
is a \term{vertex} of $K$ if the normal cone $\coll{N}(\vct{x}; K)$
has dimension $d$.  For reference, $\coll{N}(\vct{x}; K) := \{ \vct{z} \in \R^d  : \vct{z}^\adj( \smash{\vct{y}} - \vct{x} ) \leq 0
\text{ for all $\vct{y} \in K$} \}$.
\end{definition}

Heuristically, a vertex is a sharp corner of a convex set.
Vertices are always extreme points, but extreme points need not be vertices!

\subsection{Facial Geometry of the Elliptope}

The literature contains a lot of information about the
facial geometry of the elliptope. Let us present some key results,
which are due to Laurent \& Poljak~\cite{LP95:Positive-Semidefinite-Relaxation,LP96:Facial-Structure}.

\begin{fact}[Vertices] \label{fact:vertices}
The elliptope $\coll{E}_n$ has $2^{n-1}$ vertices.
These vertices are precisely the $n \times n$ cut matrices
$\vct{cc}^\adj$, where $\vct{c} \in \{ \pm 1 \}^n$ is a cut vector.
\end{fact}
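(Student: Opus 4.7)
The plan is to identify the vertices of $\coll{E}_n$ by explicitly computing the dimension of its normal cone at each point. Writing $\coll{E}_n = \Sym_+^n \cap \{\mtx{X} \in \Sym^n : \diag(\mtx{X}) = \vct{1}\}$ as the intersection of the psd cone with an affine subspace, I observe that Slater's constraint qualification holds (the identity matrix lies in the relative interior of both), so the standard sum rule for normal cones yields
\begin{equation}
\coll{N}(\mtx{X}; \coll{E}_n) = \{\mtx{D} - \mtx{M} : \mtx{D} \in \Sym^n \text{ diagonal},\ \mtx{M} \in \Sym_+^n,\ \mtx{M}\mtx{X} = 0\}
\end{equation}
for every $\mtx{X} \in \coll{E}_n$. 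By definition, $\mtx{X}$ is a vertex exactly when this cone has dimension $n(n+1)/2 = \dim \Sym^n$, so the problem reduces to a dimension count.

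Suppose $\mtx{X} \in \coll{E}_n$ has rank $r$. The psd matrices $\mtx{M}$ with $\mtx{M}\mtx{X} = 0$ are precisely those supported on $\ker(\mtx{X})$, and they form a full-dimensional convex cone inside a linear subspace of $\Sym^n$ of dimension $(n-r)(n-r+1)/2$. Adding the $n$-dimensional space of diagonal matrices, the linear span of $\coll{N}(\mtx{X}; \coll{E}_n)$ has dimension at most $n + (n-r)(n-r+1)/2$, and an elementary calculation (the inequality $(r-1)(2n-r) > 0$ for $2 \leq r \leq n$) confirms this bound meets $n(n+1)/2$ only when $r = 1$. Hence any vertex of $\coll{E}_n$ must be a rank-one correlation matrix, i.e., of the form $\vct{cc}^\adj$ with $c_i^2 = 1$ for all $i$; that is, $\vct{c}$ is a cut vector.

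Conversely, I verify that every cut matrix $\vct{cc}^\adj$ actually attains the bound. A diagonal matrix $\mtx{D}$ with $\mtx{D}\vct{c} = 0$ satisfies $D_{ii} c_i = 0$, and since $c_i = \pm 1$ this forces $\mtx{D} = 0$. Thus the diagonal subspace and $\{\mtx{M} \in \Sym^n : \mtx{M}\vct{c} = 0\}$ meet only at the origin, their sum has the full dimension $n(n+1)/2$, and $\vct{cc}^\adj$ is indeed a vertex. The count $2^{n-1}$ follows because $\vct{cc}^\adj = (-\vct{c})(-\vct{c})^\adj$ identifies cut vectors in sign-paired classes, while cut vectors from distinct classes yield distinct rank-one matrices. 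The only delicate step in this plan is justifying the normal-cone formula via Slater's condition; once that is in hand, the rest is pure dimension counting.
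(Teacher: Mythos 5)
The paper states Fact~\ref{fact:vertices} as a known result of Laurent \& Poljak~\cite{LP95:Positive-Semidefinite-Relaxation,LP96:Facial-Structure} and gives no proof of its own, so there is nothing internal to compare against; what you have supplied is a self-contained argument, and I have checked that it is correct. You take the paper's definition of vertex at face value (normal cone of dimension $n(n+1)/2$ in $\Sym^n$), decompose $\coll{E}_n = \Sym_+^n \cap \{\diag = \vct{1}\}$, and invoke the relative-interior constraint qualification to obtain $\coll{N}(\mtx{X};\coll{E}_n) = \coll{N}(\mtx{X};\Sym_+^n) + \coll{N}(\mtx{X};A)$; the identity matrix is positive definite with unit diagonal, so the qualification holds. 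Your description of $\coll{N}(\mtx{X};\Sym_+^n)$ as $\{-\mtx{M}: \mtx{M}\psdge 0,\ \mtx{M}\mtx{X}=0\}$ is the standard facial normal cone of the psd cone, and these $\mtx{M}$ indeed live in a $(n-r)(n-r+1)/2$-dimensional subspace when $\rank\mtx{X}=r$. The identity $n(n-1) - (n-r)(n-r+1) = (r-1)(2n-r)$ checks out, so $n + (n-r)(n-r+1)/2 < n(n+1)/2$ precisely when $r\geq 2$, and the only rank-one elements of $\coll{E}_n$ are the cut matrices. The converse and the count $2^{n-1}$ (pairing $\vct{c}$ with $-\vct{c}$, and observing that rank-one matrices determine $\vct{c}$ up to sign) are also correct. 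The only point worth flagging is cosmetic: the "delicate step" you single out, the normal-cone sum rule, is genuinely standard (Rockafellar, Thm.~23.8), whereas Laurent \& Poljak's original argument instead uses the characterization of faces of $\Sym_+^n$ and of the elliptope via the kernel/image decomposition. Your route is a clean alternative that works directly from the vertex definition the paper adopts.
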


The most natural candidate for a face of the elliptope
is the convex hull of a set of vertices.  This
construction does not always yield a face,
but---when it does---that face is always simplicial.  

\begin{fact}[Faces Generated by Vertices are Simplicial] \label{fact:simp-form}
Let $\vct{c}_1, \dots, \vct{c}_r \in \{ \pm 1 \}^n$ be cut vectors.
Consider the set
$$
F := \conv \big\{ \vct{c}_1 {\vct{c}_1}^\adj, \dots, \vct{c}_r {\vct{c}_r}^\adj \big\} \subset \coll{E}_n.
$$
If $F$ is a face of the elliptope $\coll{E}_n$, then $F$ is a simplicial face of $\coll{E}_n$.
\end{fact}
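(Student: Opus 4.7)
My plan is to show directly that the distinct cut matrices generating $F$ are affinely independent, so that $F$ is a simplicial face of dimension $r-1$.

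First I will carry out routine reductions. By Fact~\ref{fact:vertices}, each $c_ic_i^\adj$ is a vertex of $\coll{E}_n$, hence an extreme point; because $F$ is a face, each one also remains an extreme point of $F$. Collapsing any duplicates, $F$ is therefore a polytope whose vertex set is exactly $\{c_1c_1^\adj,\ldots,c_rc_r^\adj\}$. Next I will pick $X \in \relint F$ and write $X = \sum_i \alpha_i c_ic_i^\adj$ with $\alpha_i > 0$; the inequality $X \succeq \alpha_i c_ic_i^\adj$ forces each $c_i \in V := \range X$, so $V = \lspan{\{c_1,\ldots,c_r\}}$. The standard correspondence between faces of $\coll{E}_n$ and faces of the psd cone meeting the affine slice $\{\diag = \vct{1}\}$ then yields
$$
F = \bigl\{Y \in \coll{E}_n : \range Y \subseteq V\bigr\},
$$
because the smallest face of $\coll{E}_n$ containing $X$ is the intersection of $\coll{E}_n$ with the smallest face of the psd cone containing $X$.

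With that in hand, I will reduce to a low-dimensional spectrahedron. Letting $\rho := \dim V$, I choose an orthonormal basis $B \in \R^{n \times \rho}$ of $V$ with rows $b_k \in \R^\rho$, and put $d_i := B^\adj c_i$. The map $W \mapsto BWB^\adj$ affinely identifies $F$ with
$$
\widetilde F := \bigl\{W \in \Sym_+^\rho : b_k^\adj W b_k = 1,\ k = 1,\ldots,n\bigr\},
$$
sending $c_ic_i^\adj \mapsto d_id_i^\adj$. Because $B^\adj X B$ is positive definite and lies in $\relint \widetilde F$, the psd constraint is inactive there, giving
$$
\dim F = \dim \widetilde F = \binom{\rho+1}{2} - \dim\lspan{\{b_kb_k^\adj : 1 \le k \le n\}}.
$$
The rank-$1$ elements of $\widetilde F$ correspond, up to sign, to cut vectors in $V$, which by the first step are precisely $\pm d_1,\ldots,\pm d_r$. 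Thus $\widetilde F = F = \conv\{d_id_i^\adj\}_{i=1}^r$ is a polytope all of whose extreme points have rank~$1$.

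The hard part will be the final step: deducing from this rigidity that the $d_id_i^\adj$ are affinely independent. I would proceed by contradiction. Given an affine relation $\sum_i \beta_i d_id_i^\adj = 0$ with $\sum_i \beta_i = 0$ and $\beta \ne 0$, splitting $\beta = \beta^+ - \beta^-$ into disjoint-support nonnegative parts produces a common matrix $Y \in \widetilde F$ of rank at least $2$ admitting two convex-combination representations with disjoint supports $I^+$ and $I^-$. Analysing the smallest face of $\widetilde F$ containing $Y$ forces $\lspan{\{d_i : i \in I^+\}} = \lspan{\{d_i : i \in I^-\}}$, and combining this with the polytope property of $\widetilde F$ (itself a consequence of the face hypothesis) must yield a contradiction. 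Establishing this last implication --- that a bounded spectrahedron whose extreme points are all rank-$1$ is necessarily a simplex --- is the subtle ingredient, and is the reason Fact~\ref{fact:simp-form} is more than a formal manipulation.
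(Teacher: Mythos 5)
The paper does not prove Fact~\ref{fact:simp-form}; it records it as known and attributes it to Laurent \& Poljak~\cite{LP95:Positive-Semidefinite-Relaxation,LP96:Facial-Structure}, so there is no in-paper proof to compare against. Judged on its own terms, your reductions are sound as far as they go: cut matrices are extreme points of $\coll{E}_n$ hence of $F$; a face $F$ containing $X$ in its relative interior is $\{Y\in\coll{E}_n:\range Y\subseteq\range X\}$; passing to $\widetilde F=\{W\in\Sym_+^\rho:b_k^\adj W b_k=1\}$ via an orthonormal basis of $V=\range X$ is a legitimate affine isomorphism; and rank-$1$ elements of $\widetilde F$ are automatically extreme, so they coincide with the vertices $d_id_i^\adj$.

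The problem is the final step, which you yourself flag as ``the subtle ingredient,'' and that flag is accurate: nothing in the outline closes it. The claim you would need is that a bounded full affine slice of $\Sym_+^\rho$ that happens to be a polytope with all vertices of rank one must be a simplex. Your sketch --- take an affine relation $\sum_i\beta_i d_id_i^\adj=0$ with $\sum_i\beta_i=0$, split $\beta=\beta^+-\beta^-$, form the common point $Y$, and deduce $\lspan\{d_i:i\in I^+\}=\lspan\{d_i:i\in I^-\}$ --- is correct up to that point, but having two disjoint index sets with coincident spans is not, by itself, a contradiction. (Over the reals one can easily produce distinct unit vectors $d_1,d_2,d_3,d_4$ with $d_1d_1^\adj+d_3d_3^\adj=d_2d_2^\adj+d_4d_4^\adj$; what rules this out here must be either the $\pm1$ structure of the $c_i$, or the hypothesis that the slice $\widetilde F$ is \emph{exactly} the polytope rather than merely containing it. Your argument has not yet invoked either in a way that bites.) Put differently, the ``rigidity'' statement you defer \emph{is} the theorem, and the whole content of Laurent--Poljak's proof lies there. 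As written, your proposal is an honest reduction with the crux left open, not a proof. To close the gap you would need either a dimension count showing $\dim\lspan\{b_kb_k^\adj:1\le k\le n\}=\binom{\rho+1}{2}-(r-1)$, or a genuinely combinatorial argument exploiting that the $d_i$ are images of $\pm1$ vectors --- the route Laurent \& Poljak actually take.
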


Fact~\ref{fact:simp-form} does \emph{not} assert that every simplicial
face of the elliptope is generated by vertices.  Even so,
we can bound the possible dimension of a simplicial face,
regardless of its structure.

\begin{fact}[Dimension of Simplicial Faces] \label{fact:simp-dim}
The elliptope $\coll{E}_n$ has a simplicial face of dimension $k$
if and only if $k(k+1) \leq 2(n-1)$.
In particular, it is necessary that the dimension $k < \sqrt{2(n-1)}$.
\end{fact}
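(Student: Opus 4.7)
The plan is to characterize every face of $\coll{E}_n$ through a subspace $V \subseteq \R^n$, express its dimension via the Hadamard square of $V$, and then bound $k$ by noting that the Hadamard square lives in $\R^n$. The first ingredient is the standard fact that every face of $\coll{E}_n$ takes the form $F_V := \{X \in \coll{E}_n : \range X \subseteq V\}$ for some subspace $V \subseteq \R^n$; this follows from SDP duality applied to an exposing linear functional, whose optimal dual slack matrix $M \succeq 0$ has kernel $V$. Fixing an $n \times r$ matrix $R$ whose columns are a basis of $V$ (so $r = \dim V$), the map $A \mapsto R A R^\adj$ identifies $F_V$ with the spectrahedron $\{A \in \Sym_+^r : \diag(RAR^\adj) = \vct{1}\}$. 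The affine hull of this spectrahedron is defined by $\dim(V \odot V)$ independent linear constraints, where $V \odot V := \lspan\{\vct{a} \odot \vct{b} : \vct{a}, \vct{b} \in V\} \subseteq \R^n$ is the Hadamard square of $V$. Consequently,
\begin{equation*}
\dim F_V \;=\; \tbinom{r+1}{2} \,-\, \dim(V \odot V).
\end{equation*}

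Now let $F$ be a simplicial $k$-face of $\coll{E}_n$ with extreme points $X_0, \dots, X_k$. Setting $X^* := \tfrac{1}{k+1}\sum X_i \in \relint F$ and $V := \range X^*$, I have $F = F_V$, as $F$ is the unique face containing $X^*$ in its relative interior. The critical step is to show $r = \dim V = k+1$. First, I would argue each $X_i$ is a cut matrix: a rank-$\geq 2$ extreme point of the spectrahedron $F_V$ would generically lie on a continuum of extreme points, contradicting the fact that a simplex has only finitely many extreme points. Writing $X_i = \vct{v}_i \vct{v}_i^\adj$ with $\vct{v}_i \in V \cap \{\pm 1\}^n$, the next step is to show $\vct{v}_0, \dots, \vct{v}_k$ are linearly independent. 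If there were a nontrivial relation $\sum \gamma_i \vct{v}_i = 0$ with $\sum \gamma_i = 0$, then the perturbation $X^* + t \sum \gamma_i X_i$ would preserve both positive semidefiniteness (for small $|t|$, since $X^*$ is positive definite on $V$) and the unit diagonal, placing it in $\coll{E}_n$; yet for small $t > 0$ it would have a negative barycentric coordinate with respect to the $X_i$, producing an element of $\coll{E}_n \cap \mathrm{aff}(F) \setminus F$ and contradicting the general face identity $F = \coll{E}_n \cap \mathrm{aff}(F)$.

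With $r = k+1$ established, the dimension formula gives $\dim(V \odot V) = \tbinom{k+2}{2} - k = \tfrac{k(k+1)}{2} + 1$, and since $V \odot V \subseteq \R^n$ we obtain $\tfrac{k(k+1)}{2} + 1 \leq n$, i.e., $k(k+1) \leq 2(n-1)$. For the converse, one constructs a subspace $V$ of dimension $k+1$ spanned by $k+1$ cut vectors whose Hadamard square attains the maximum dimension $\tfrac{k(k+1)}{2} + 1$; this is feasible exactly when $n \geq \tfrac{k(k+1)}{2} + 1$, and the resulting $F_V$ is a simplicial $k$-face. I expect the main obstacle to be the linear-independence step, especially the degenerate subcase where every linear relation among the $\vct{v}_i$ forces $\sum \gamma_i \neq 0$ (so zero lies in the affine hull of the cut vectors): the direct perturbation then breaks down, but one can still rule out this case by exhibiting rank-$2$ extreme points of $F_V$ on its curved boundary that prevent it from being a simplex.
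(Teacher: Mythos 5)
Your framework follows the Laurent--Poljak route cited for this fact (the paper does not reprove it): classify faces as $F_V$, invoke $\dim F_V = \tbinom{r+1}{2} - \dim(V\odot V)$ with $r = \dim V$, and reduce the inequality to showing $\dim V = k+1$. That skeleton is sound, but two central steps are wrong or missing as written.

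The perturbation argument for linear independence does not actually produce a contradiction. Since the $X_i$ are affinely independent, the barycentric coordinates of $X^*+t\sum\gamma_i X_i$ are exactly $\tfrac{1}{k+1}+t\gamma_i$, and these remain strictly positive for all small $|t|$; the perturbed point therefore stays inside $F$, so the assertion that ``for small $t>0$ it would have a negative barycentric coordinate'' is false. To salvage the idea one would have to push $t$ to the first value $t_1$ at which some coordinate vanishes and then verify the perturbed matrix is still positive semidefinite for $t$ slightly beyond $t_1$ (the crux being that $\vct{v}_0\in\lspan\{\vct{v}_1,\dots,\vct{v}_k\}$, so dropping $X_0$ does not reduce the rank of the sum on $V$); that verification is absent. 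The subcase $\sum\gamma_i\ne 0$ also needs a genuinely different argument, which you acknowledge but do not supply. The earlier step---that every extreme point of a simplicial face must be a cut matrix---is supported only by the heuristic that rank-$\geq 2$ extreme points ``generically lie on a continuum''; this is not an argument, and it is precisely a nontrivial theorem of Laurent and Poljak that every polyhedral face of $\coll{E}_n$ is a simplex generated by cut matrices. Finally, the converse is asserted without a construction: one must actually exhibit $k+1$ cut vectors spanning a $(k+1)$-dimensional $V$ with $\dim(V\odot V)=\tbinom{k+1}{2}+1$, and note that the combinatorial construction of Fact~\ref{fact:comb-face} only reaches $k\lesssim\log_2 n$, so it cannot serve across the full range $k\lesssim\sqrt{n}$.
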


Our interest in the facial structure of the elliptope
is motivated by its connection with the facial structure
of the cut polytope~\eqref{eqn:cut-polytope}.

\begin{fact}[Coincidental Faces] \label{fact:coincidence}
If $F$ is a face of the elliptope $\coll{E}_n$ generated by vertices,
then $F$ is also a face of the cut polytope $\coll{C}_n$.
\end{fact}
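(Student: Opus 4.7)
The plan is to reduce Fact~\ref{fact:coincidence} to a general principle in convex geometry: if $K_1 \subseteq K_2$ are convex sets in $\R^d$ and $F$ is a face of $K_2$ that happens to sit inside $K_1$, then $F$ is automatically a face of $K_1$. Granting this lemma, the argument is a one-liner because $\coll{C}_n \subseteq \coll{E}_n$ (every cut matrix is a correlation matrix, and the elliptope is convex).

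First I would verify the general lemma. Given such $F$ and points $\vct{x}, \vct{y} \in K_1$ with $\theta \vct{x} + (1-\theta)\vct{y} \in F$ for some $\theta \in (0,1)$, I just observe that $\vct{x}, \vct{y} \in K_1 \subseteq K_2$, so the face property of $F$ inside $K_2$ forces $\vct{x}, \vct{y} \in F$. Convexity of $F$ is inherited from its status as a face of $K_2$, and $F \subseteq K_1$ is given by hypothesis, so $F$ meets the definition of a face of $K_1$.

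Next I would apply this to our setting. By assumption, $F = \conv\{\vct{c}_1 {\vct{c}_1}^\adj, \ldots, \vct{c}_r {\vct{c}_r}^\adj\}$ is a face of $\coll{E}_n$, where each $\vct{c}_i \in \{\pm 1\}^n$ is a cut vector. Each generator $\vct{c}_i {\vct{c}_i}^\adj$ is a cut matrix, hence lies in $\coll{C}_n$, and since $\coll{C}_n$ is convex, $F \subseteq \coll{C}_n$. Because $\coll{C}_n \subseteq \coll{E}_n$ (each cut matrix is psd with unit diagonal, hence a correlation matrix), the general lemma applies with $K_1 = \coll{C}_n$ and $K_2 = \coll{E}_n$, and we conclude that $F$ is a face of $\coll{C}_n$.

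There is no real obstacle here; the proof is essentially a bookkeeping check on the definition of a face. The only subtlety worth flagging is that the hypothesis ``generated by vertices'' is essential precisely to guarantee $F \subseteq \coll{C}_n$, which is what lets the nesting argument go through. An analogous statement would fail for a face of $\coll{E}_n$ that is not generated by cut matrices, since such a face need not lie inside $\coll{C}_n$ at all.
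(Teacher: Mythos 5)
Your proposal is correct and is essentially the same argument the paper sketches right after the statement: the paper cites exactly the three ingredients you use ($\coll{C}_n \subseteq \coll{E}_n$, the vertices of $\coll{E}_n$ are cut matrices so $F \subseteq \coll{C}_n$, and the definition of a face passes to a contained convex set). You have merely made the nesting lemma explicit, which is a fine way to organize it.
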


\noindent
Indeed, Fact~\ref{fact:coincidence} follows directly from the definition
of a face, the fact that the elliptope contains
the cut polytope, and the fact that the vertices of the elliptope are
elements of the cut polytope.

\subsection{A Random Model for Simplicial Faces}

We will study the extent to which the elliptope $\coll{E}_n$
approximates the cut polytope $\coll{C}_n$ by identifying a large number
of simplicial faces of the elliptope.
Our approach is based on the probabilistic method.
In view of Fact~\ref{fact:simp-form}, we can attempt
to construct simplicial faces of $\coll{E}_n$ by drawing
a collection of random vertices and forming its convex hull.

For a parameter $p \in [0,1]$,
we define the \term{signed Bernoulli distribution}:
$$
\textsc{SBern}(p) := \begin{cases}
	+1, & \text{with probability $p$;} \\
	-1, & \text{with probability $1-p$.}
\end{cases}
$$
We extend this distribution to vectors of length $n$
by taking a direct product:
$$
\textsc{SBern}(p, n) := \textsc{SBern}(p) \times \dots \times \textsc{SBern}(p)
	\in \{ \pm 1 \}^n.
$$
That is, a random vector from $\textsc{SBern}(p,n)$
has $n$ entries, each drawn independently from $\textsc{SBern}(p)$.

Fix the parameter $p \in (0, 1)$,
the number $r$ of vertices, and the dimension $n$.
Let us present a random model $\textsc{Face}(p,r,n)$ for
a prospective face $F$ of the elliptope $\coll{E}_n$.
Draw random vectors $\vct{\xi}_1, \dots, \vct{\xi}_r$
independently from the distribution $\textsc{SBern}(p,n)$.
Construct the random convex set
\begin{equation} \label{eqn:random-face}
F 	:= \conv\big\{ \vct{\xi}_1 {\vct{\xi}_1}^\adj, \dots, \vct{\xi}_r {\vct{\xi}_r}^\adj \big\}
	\subset \coll{E}_n.
\end{equation}
Our goal is to understand when $F$ is likely to be a simplicial face of $\coll{E}_n$.
The parameter $p$ controls the typical ``balance'' of positive and
negative entries that appear in the random vertices
$\vct{\xi}_i{\vct{\xi}_i}^\adj$.
See Figure~\ref{fig:balance} for a simple illustration.

\begin{figure}[t]
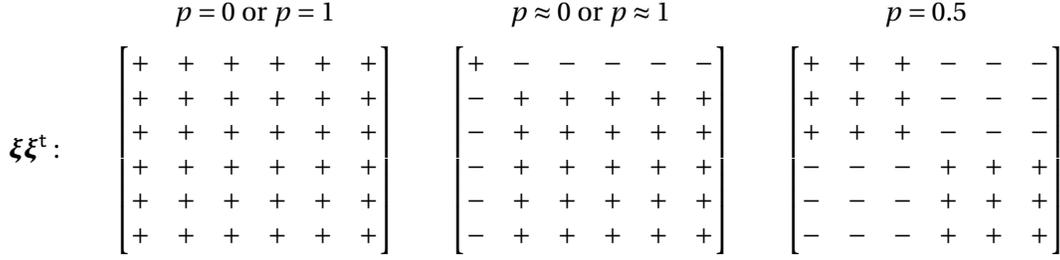

\begin{center}
$$
\begin{array}{ccccccc}
&& p = 0 \text{ or } p = 1
&&
p \approx 0 \text{ or } p \approx 1
&&
p = 0.5
\vspace{0.5pc} \\
\vct{\xi \xi}^\adj :
&\hspace{1pc} &
\begin{bmatrix}
	+ & + & + & + & + & + \\
	+ & + & + & + & + & + \\
	+ & + & + & + & + & + \\
	+ & + & + & + & + & + \\
	+ & + & + & + & + & + \\
	+ & + & + & + & + & +
\end{bmatrix}
& \hspace{3pc} &
\begin{bmatrix}
	+ & - & - & - & - & - \\
	- & + & + & + & + & + \\
	- & + & + & + & + & + \\
	- & + & + & + & + & + \\
	- & + & + & + & + & + \\
	- & + & + & + & + & + \\
\end{bmatrix}
& \hspace{3pc} &
\begin{bmatrix}
	+ & + & + & - & - & - \\
	+ & + & + & - & - & - \\
	+ & + & + & - & - & - \\
	- & - & - & + & + & + \\
	- & - & - & + & + & + \\
	- & - & - & + & + & +
\end{bmatrix}
\end{array}
$$
\caption{\textsl{Balance of Random Cut Matrices.}
Let $\vct{\xi} \sim \textsc{SBern}(p,n)$.
The balance parameter $p$ controls the (average) proportion
of entries in the random cut matrix $\vct{\xi \xi}^\adj$
that are positive and negative.  This display gives a nominal illustration of the effect.
We have abbreviated the numbers $\pm 1$ by their signs $\pm$.}
\label{fig:balance}
\end{center}
\end{figure}

\subsection{Simplicial Faces: Quantitative Results}

Our first set of results gives quantitative bounds on
the probability that the random model $\textsc{Face}(p,r,n)$
generates a simplicial face of $\coll{E}_n$.  By this,
we mean that the bounds contain explicit and reasonable
constants, but the form of the bounds is suboptimal.
The first statement concerns the special case where the
balance parameter $p = 0.5$.

\begin{theorem}[Simplicial Faces I] \label{thm:balanced}
Fix the balance parameter $p = 0.5$, the number $r$ of vertices, and the dimension $n$.
Draw a random set $F$ from the distribution $\emph{\textsc{Face}}(p, r, n)$
described in~\eqref{eqn:random-face}.
Then
$$
\Prob{ \text{$F$ is a simplicial face of $\coll{E}_n$ with dimension $r-1$} }
	\quad\geq\quad 1 \ -\ r^2 \exp\left( \frac{-n}{r^2} \right).
$$
\end{theorem}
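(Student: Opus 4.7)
The plan is to reduce the assertion to the linear independence of a collection of $\pm 1$ vectors built from the $\vct{\xi}_i$, and then to dispatch the resulting probabilistic claim with the matrix Chernoff inequality.

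\emph{Reduction to linear algebra.} By Fact~\ref{fact:vertices}, each $\vct{\xi}_i \vct{\xi}_i^\adj$ is a vertex of $\coll{E}_n$, and by Fact~\ref{fact:simp-form} any face of $\coll{E}_n$ generated by vertices is automatically simplicial. So it suffices to show that $F$ is a face of $\coll{E}_n$ of dimension $r-1$. The smallest face of $\Sym_+^n$ containing $F$ is $\{\mtx{X} \psdge 0 : \range(\mtx{X}) \subseteq V\}$ with $V := \lspan\{\vct{\xi}_1,\dots,\vct{\xi}_r\}$, and intersecting with $\{\mtx{X} : \diag(\mtx{X}) = \vct{1}\}$ produces a face of $\coll{E}_n$ containing $F$. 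So $F$ is itself a face iff it fills this intersection. Parametrize $\mtx{X} = \mtx{U} \mtx{Y} \mtx{U}^\adj$ with $\mtx{U} := [\vct{\xi}_1 \mid \cdots \mid \vct{\xi}_r]$ and $\mtx{Y} \in \Sym_+^r$; using $(\vct{\xi}_j)_i^2 = 1$, the $n$ diagonal constraints rewrite as
$$
\trace(\mtx{Y}) + 2 \sum_{j<k} (\vct{\xi}_j \odot \vct{\xi}_k)_i \, Y_{jk} = 1
\qquad (i = 1, \ldots, n).
$$
A short dimension count in $\Sym^r$ shows that the solution set of this affine system is exactly the $(r-1)$-dimensional simplex $\{\diag(\vct{\lambda}) : \vct{1}^\adj \vct{\lambda} = 1\}$ precisely when the $\binom{r}{2}+1$ vectors
$$
\vct{1}, \qquad \vct{\xi}_j \odot \vct{\xi}_k \quad (1 \leq j < k \leq r)
$$
are linearly independent in $\R^n$. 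The same independence also forces $\vct{\xi}_1, \dots, \vct{\xi}_r$ to be linearly independent (any nontrivial relation $\vct{\xi}_r = \sum_{j<r} c_j \vct{\xi}_j$, multiplied pointwise by $\vct{\xi}_r$, would yield $\vct{1} = \sum_{j<r} c_j (\vct{\xi}_j \odot \vct{\xi}_r)$), so the parametrization is injective and $F$ has dimension exactly $r-1$.

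\emph{Probabilistic estimate.} Stack the $\binom{r}{2}+1$ vectors above as columns of a random matrix $\mtx{W} \in \R^{n \times (\binom{r}{2}+1)}$, and decompose $\mtx{W}^\adj \mtx{W} = \sum_{i=1}^n \vct{z}_i \vct{z}_i^\adj$, where $\vct{z}_i \in \{\pm 1\}^{\binom{r}{2}+1}$ is the $i$-th row of $\mtx{W}$ and $\|\vct{z}_i\|^2 = \binom{r}{2}+1$. In the balanced case $p = 0.5$, a direct second-moment computation gives $\Expect[\vct{z}_i \vct{z}_i^\adj] = \Id$, and hence $\Expect[\mtx{W}^\adj \mtx{W}] = n\, \Id$. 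The matrix Chernoff inequality for sums of iid bounded psd matrices then yields
$$
\Prob{\lambda_{\min}(\mtx{W}^\adj \mtx{W}) = 0}
\;\le\; (\tbinom{r}{2}+1) \exp\!\left(-\frac{n}{\tbinom{r}{2}+1}\right)
\;\le\; r^2 \exp\!\left(-\frac{n}{r^2}\right),
$$
where the last step uses $\binom{r}{2}+1 \leq r^2$. On the complementary event, $\mtx{W}$ has full column rank, so the required linear independence holds, and the reduction completes the proof.

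\emph{Main obstacle.} The conceptual crux is the algebraic reduction: recognizing that the geometric condition ``$F$ is a face of $\coll{E}_n$'' collapses to a clean linear independence condition for the Hadamard-product vectors $\vct{\xi}_j \odot \vct{\xi}_k$ together with $\vct{1}$. Once that bridge is in place, the choice $p = 0.5$ is precisely what annihilates the off-diagonal terms of $\Expect[\vct{z}_i \vct{z}_i^\adj]$, making the expected Gram matrix a clean multiple of $\Id$, so matrix Chernoff delivers the stated bound essentially at face value. The sharp matching with $\exp(-n/r^2)$ is a mild bookkeeping point, arising because $L = \binom{r}{2}+1 \leq r^2$.
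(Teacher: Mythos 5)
Your proposal is correct, and it reaches the stated bound by essentially the same tool---a single application of the matrix Chernoff lower tail bound to a rank-one sum with iid $\pm1$ rows. The structural route differs in one useful way from the paper's. The paper cites the Laurent--Poljak sufficient condition (Fact~\ref{fact:lp-simplicial}), which asks that \emph{both} $\mtx{W} = [\vct{\xi}_1 \mid \cdots \mid \vct{\xi}_r]$ \emph{and} $\mtx{Z} = [\vct{1} \mid \cdots \mid \vct{\xi}_i \odot \vct{\xi}_j \mid \cdots]$ have full column rank, and verifies each via Chernoff before taking a union bound. You instead re-derive the condition geometrically (via the face $G_V = \{\mtx{X} \psdge 0 : \range\mtx{X} \subseteq V\}$ of $\Sym_+^n$ and the diagonal slice), and arrive at full column rank of $\mtx{Z}$ alone; you then observe correctly that any linear relation $\sum a_j \vct{\xi}_j = \vct{0}$ with $a_k\neq 0$, Hadamard-multiplied by $\vct{\xi}_k$, yields a nontrivial relation among $\vct{1}$ and the $\vct{\xi}_j \odot \vct{\xi}_k$, so full column rank of $\mtx{Z}$ already forces full column rank of $\mtx{W}$. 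This removes the second Chernoff application and the union bound, and it silently points out a mild redundancy in the Laurent--Poljak condition; the paper's two-term union $r\exp(-n/(2r)) + R\exp(-n/(2R))$ and your one-term estimate both collapse to $r^2 \exp(-n/r^2)$ via $2R \leq r^2$ for $r\geq 2$.

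Two small points worth tightening. First, the ``short dimension count'' that equates the affine slice of $G_V$ with the diagonal simplex when $\vct{1}$ and the Hadamard products are independent is exactly the content you must supply; the displayed map $\mtx{Y} \mapsto \trace(\mtx{Y})\,\vct{1} + 2\sum_{j<k} Y_{jk}\,(\vct{\xi}_j\odot\vct{\xi}_k)$ has kernel $\{\diag(\vct{\lambda}) : \vct{1}^\adj\vct{\lambda}=0\}$ precisely when those $\binom{r}{2}+1$ vectors are independent, and this is what should be stated. Second, the displayed Chernoff estimate $R\exp(-n/R)$ (with $R = \binom{r}{2}+1$) is the sharp form of the lower tail at threshold zero; the simplified form quoted as Fact~\ref{fact:chernoff} gives the weaker $R\exp(-n/(2R))$. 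Either suffices for the stated $r^2\exp(-n/r^2)$, but if you intend to cite the simplified bound you should carry the factor of $2$.
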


\noindent
The proof of Theorem~\ref{thm:balanced} begins in Section~\ref{sec:proof-overview}
and continues in Section~\ref{sec:chernoff}.

Here is the most noteworthy consequence of Theorem~\ref{thm:balanced}.
When $r^2 \log r^2 < n$, there is a positive probability that a random set
$F \sim \textsc{Face}(0.5,r,n)$ is a simplicial face of $\coll{E}_n$ with
dimension $r - 1$.  The stricter bound $r \leq \sqrt{n/\log n}$ is also
sufficient for $F$ to be a simplicial face.
It follows from Fact~\ref{fact:coincidence} that $\coll{E}_n$ and $\coll{C}_n$
share a large number of simplicial faces with dimension up to $\sqrt{n / \log n}$.

Next, we generalize to the case where the balance parameter $p$ is general.
This result has slightly larger constants than Theorem~\ref{thm:balanced}.

\begin{theorem}[Simplicial Faces II] \label{thm:unbalanced-chernoff}
Fix the balance parameter $p \in (0,1)$, the number $r$ of vertices, and the dimension $n$.
Draw a random set $F$ from the distribution $\emph{\textsc{Face}}(p, r, n)$
described in~\eqref{eqn:random-face}.
Then
$$
\Prob{ \text{$F$ is a simplicial face of $\coll{E}_n$ with dimension $r-1$} }
	\quad\geq\quad 1 \ -\ r^2 \exp\left( \frac{-4p^2(1-p)^2 n}{r^2} \right).
$$
\end{theorem}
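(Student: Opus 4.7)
The plan follows the two-step template established for Theorem~\ref{thm:balanced}: (i) a deterministic reduction which, given the vectors $\vct{\xi}_1, \ldots, \vct{\xi}_r$, identifies a linear-algebraic event $\mathcal{E}$ implying that $F$ is a face of $\coll{E}_n$ of dimension $r - 1$; and (ii) a concentration estimate showing $\Prob{\mathcal{E}^c} \leq r^2 \exp(-4 p^2(1-p)^2 n / r^2)$. Since Fact~\ref{fact:simp-form} makes any face generated by cut matrices automatically simplicial, step (i) together with affine independence of the $\vct{\xi}_i \vct{\xi}_i^\adj$ is enough. I would reuse the deterministic certificate developed for the balanced case---typically a dual witness $\mtx{Y} \in \Sym_+^n$ vanishing on the $\vct{\xi}_i$ together with a uniqueness argument for correlation matrices supported in $\lspan\{\vct{\xi}_i\}$---and verify that its validity is controlled by the pairwise quantities $\ip{\vct{\xi}_j}{\vct{\xi}_k}$ for $j \neq k$.

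The novelty in the unbalanced regime is that these pairwise quantities are biased. For $j \neq k$, each coordinate of the Hadamard product $\vct{\xi}_j \odot \vct{\xi}_k$ takes values in $\{\pm 1\}$ with $\Prob{(\vct{\xi}_j)_i (\vct{\xi}_k)_i = -1} = 2p(1-p)$, so $\ip{\vct{\xi}_j}{\vct{\xi}_k}$ has mean $(2p-1)^2 n$. I would recenter: define $D_{jk} := \ip{\vct{\xi}_j}{\vct{\xi}_k} - (2p-1)^2 n$, a sum of $n$ independent $\pm 1$-valued random variables with mean zero. Hoeffding's inequality then yields
\[
\Prob{|D_{jk}| > t} \leq 2 \exp\!\left(-\frac{t^2}{2n}\right).
\]
Choosing $t = 2\sqrt{2}\, p(1-p)\, n / r$ produces the per-pair exponent $-4 p^2(1-p)^2 n/r^2$, and a union bound over the $\binom{r}{2} \leq r^2/2$ pairs absorbs into the $r^2$ prefactor.

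The remaining task is to confirm that the threshold $|D_{jk}| \leq 2\sqrt{2}\, p(1-p)\, n/r$ is sufficient for the deterministic certificate to apply. Heuristically this works because, after factoring out the mean-shift along the $\vct{1}$ direction, the Gram matrix of the $\vct{\xi}_i$ has expected spectral gap of order $4p(1-p)\, n$, so pairwise deviations on the order of $p(1-p)\, n/r$ preserve strict diagonal dominance. The main obstacle is carrying out this bookkeeping precisely enough to produce the constant $4 p^2(1-p)^2$ while keeping the dependence on the random input confined to the $O(r^2)$ pairwise deviations $D_{jk}$; a brute-force operator-norm bound on an $O(r^2) \times O(r^2)$ Hadamard-product Gram matrix would demand $O(r^4)$ union-bound events and destroy the stated $r^2$ prefactor. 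Once these estimates are in place, affine independence of the cut matrices on the high-probability event follows immediately from linear independence of the $\vct{\xi}_i$, completing the proof.
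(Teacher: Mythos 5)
Your proposal diverges from the paper's argument, and it has a genuine gap at the crucial step. The deterministic certificate the paper actually uses is the Laurent--Poljak sufficient condition (Fact~\ref{fact:lp-simplicial}, via the variant in Remark~\ref{rem:var-suff}): one must show that the $n \times R'$ matrix $\mtx{Y}$ whose columns are $\vct{1} - \vct{\xi}_i \odot \vct{\xi}_j$ (over all pairs $i < j$, with $R' = r(r-1)/2$) has full column rank, in addition to the easier requirement that $\mtx{W} = [\vct{\xi}_1 \cdots \vct{\xi}_r]$ has full column rank. Your plan controls only the $O(r^2)$ pairwise inner products $\ip{\vct{\xi}_j}{\vct{\xi}_k}$ via Hoeffding. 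That is enough to control the $r \times r$ Gram matrix of $\mtx{W}$, but it says nothing about the $R' \times R'$ Gram matrix of $\mtx{Y}$: the entry of $\mtx{Y}^\adj \mtx{Y}$ indexed by two pairs $\{i,j\}$ and $\{k,l\}$ is $\sum_{m=1}^n \bigl(1 - (\vct{\xi}_i)_m (\vct{\xi}_j)_m\bigr)\bigl(1 - (\vct{\xi}_k)_m (\vct{\xi}_l)_m\bigr)$, a quartic statistic in the cut vectors that is not determined by the quadratic quantities $D_{jk}$. In particular, ``strict diagonal dominance'' of the $r \times r$ Gram matrix of the $\vct{\xi}_i$ does not imply anything about linear independence of the $\binom{r}{2}$ Hadamard products. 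You correctly notice this obstacle yourself (``a brute-force operator-norm bound on an $O(r^2)\times O(r^2)$ Hadamard-product Gram matrix would demand $O(r^4)$ union-bound events''), but the proposal does not resolve it.

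The paper sidesteps the $O(r^4)$ union bound by applying the matrix Chernoff inequality (Fact~\ref{fact:chernoff}) directly to the rows of $\mtx{Y}$, which are iid copies of a random vector $\vct{y}\in\R^{R'}$. The prefactor $R' \leq r^2/2$ then comes from the dimension appearing in the matrix Chernoff bound, not from counting pairs. The nontrivial piece is lower-bounding $\lambda_{\min}(\Expect[\vct{yy}^\adj])$, which the paper does in Claim~\ref{claim:min-eig} by embedding $\Expect[\vct{yy}^\adj]$ as the restriction of a psd operator on the symmetric tensor square $\R^r \vee \R^r$ and reading off the spectrum from the decomposition~\eqref{eqn:sigma-prime}. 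That eigenvalue computation is what produces the factor $(1-\alpha)^2 = 16 p^2(1-p)^2$ in the exponent; there is no analogue of it in your sketch. Without an argument that controls the full-rank condition on $\mtx{Y}$ (or $\mtx{Z}$), the proposed reduction to pairwise deviations does not establish the theorem.
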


\noindent
The proof of Theorem~\ref{thm:unbalanced-chernoff} begins in Section~\ref{sec:proof-overview}
and continues in Section~\ref{sec:chernoff}.

Theorem~\ref{thm:unbalanced-chernoff} yields simplicial faces when the number $r$
satisfies $r \leq 2p(1-p) \sqrt{n/\log n}$.  In particular,
for any fixed choice of the balance parameter $p$ and the number $r$ of vertices,
the model $\textsc{Face}(p,r,n)$ produces simplicial faces whenever the dimension $n$ is sufficiently large.

\subsection{Simplicial Faces: Qualitative Result}

Our last result gives a qualitative bound on the probability that
the random model $\textsc{Face}(p,r,n)$ generates a simplicial face of $\coll{E}_n$.
By this, we mean that the form of the bound improves on Theorem~\ref{thm:unbalanced-chernoff},
but the analysis incurs large constant factors.

\begin{theorem}[Simplicial Faces III] \label{thm:unbalanced}
Fix the balance parameter $p \in (0,1)$, the number $r$ of vertices, and the dimension $n$.
Draw a random set $F$ from the distribution $\emph{\textsc{Face}}(p, r, n)$
described in~\eqref{eqn:random-face}.
Then
$$
\Prob{ \text{$F$ is a simplicial face of $\coll{E}_n$ with dimension $r-1$} }
	\quad\geq\quad 1 \ -\ 4 \exp\left( \frac{r^2 - \cnst{c} p^2(1-p)^2 n}{4} \right).
$$
The number $\cnst{c}$ is a positive, absolute constant that satisfies $\cnst{c} \geq 0.0003$.
\end{theorem}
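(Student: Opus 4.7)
The plan is to reduce the event ``$F$ is a simplicial face of $\coll{E}_n$ with dimension $r-1$'' to a quantitative positive-definiteness statement about a random matrix, and then prove that statement by matrix concentration. Following the setup of the proof overview in Section~\ref{sec:proof-overview}, this reduction presents a symmetric matrix $\mtx{M}(\vct{\xi}_1,\dots,\vct{\xi}_r)$ of dimension $\lesssim r^2$ whose entries are sums of $n$ iid terms built from the underlying $\textsc{SBern}(p)$ coordinates, and the face property holds as soon as $\lambda_{\min}(\mtx{M}) > 0$. Affine independence of the vertices comes along for free from $\lambda_{\min}(\mtx{M})>0$, so the target probability becomes a single tail bound for the smallest eigenvalue of $\mtx{M}$.

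Theorem~\ref{thm:unbalanced-chernoff} estimates this by controlling the $O(r^2)$ entries of the deviation $\mtx{M} - \Expect \mtx{M}$ one at a time via Chernoff and paying an $r^2$ union bound. To excise that prefactor, the key idea is to control $\mtx{M} - \Expect \mtx{M}$ as a single random matrix. I would write $\mtx{M} - \Expect \mtx{M} = \sum_{k=1}^{n} \mtx{Y}_k$ as a sum of $n$ independent, mean-zero, bounded symmetric matrices indexed by the $n$ coordinates, and feed this into a matrix concentration inequality. Equivalently, use the variational representation $\lambda_{\min}(\mtx{M}) = \inf_{\norm{\vct{v}}=1} \vct{v}^\adj \mtx{M} \vct{v}$ together with Talagrand/Bobkov--G\"otze concentration for a Lipschitz function of the independent Rademacher-type coordinates, and a chaining or net bound for the supremum over the unit sphere in dimension $\lesssim r^2$. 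Either route delivers a sub-Gaussian tail of the form $\exp(-t^{2}/(\cnst{C}\, p^{2}(1-p)^{2} n))$ with an additive entropy cost of order $r^{2}$, which is exactly what is needed to match the claimed bound $\exp((r^{2} - \cnst{c}\, p^{2}(1-p)^{2} n)/4)$; the constant prefactor of $4$ in the theorem can be absorbed by applying the concentration inequality once in each direction or to two closely related matrices.

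The main obstacle is the deterministic input to this machinery: a spectral lower bound of the form $\lambda_{\min}(\Expect \mtx{M}) \geq \cnst{c}'\, p^{2}(1-p)^{2}\, n$ with an explicit constant $\cnst{c}'$. This forces one to unpack the block structure of $\Expect \mtx{M}$, whose entries are determined by the low-order moments $\Expect[\xi] = 2p-1$, $\Expect[\xi^{2}] = 1$ and their products, and then to diagonalize or bound the resulting Kronecker-type expression after quotienting out the obvious rank-deficient directions (those corresponding to the diagonal constraint $\diag(\mtx{X}) = \vct{1}$ and to trivial symmetries of the $\vct{\xi}_{i}\vct{\xi}_{i}^{\adj}$). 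The numerical constant $\cnst{c} \geq 0.0003$ will then emerge from balancing this spectral lower bound against the variance proxy and Orlicz norm of the summands $\mtx{Y}_{k}$ in the concentration inequality, with the factor $p^{2}(1-p)^{2}$ tracking the smallest nontrivial moment of the $\textsc{SBern}(p)$ distribution that enters both the signal and the fluctuation scale.
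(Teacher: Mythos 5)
Your high-level reduction is sound and matches the paper's: Proposition~\ref{prop:suff-rdm} converts the face property to the statement that two random matrices with iid rows (of widths $r$ and $R = 1 + r(r-1)/2 \leq r^2/2$) have full column rank, equivalently that two Gram-type matrices of dimension at most $R$ are positive definite. The difficulty, however, is not where you place it.

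A matrix--Bernstein decomposition $\mtx{M} - \Expect\mtx{M} = \sum_{k=1}^n \mtx{Y}_k$ with bounded rank-one summands reproduces Theorem~\ref{thm:unbalanced-chernoff} rather than improving it. For $\mtx{Y}_k = \vct{z}_k\vct{z}_k^\adj - \Expect[\vct{z}_k\vct{z}_k^\adj]$ one has $\norm{\mtx{Y}_k}$ and $\norm{\Expect\mtx{Y}_k^2}$ both of order $R$, so the resulting tail is $R\exp(-\cnst{c}\,n/R)$. The dimensional prefactor $R$ is not the problem; the factor $1/R \sim 1/r^2$ in the exponent \emph{is} the parasitic logarithm, and no matrix concentration inequality whose input is only boundedness of the summands will remove it. The net-plus-Lipschitz-concentration route you sketch has the same defect if carried out literally: for a fixed unit $\vct{v}$, the scalar $\vct{v}^\adj\mtx{Y}_k\vct{v} = (\vct{z}_k^\adj\vct{v})^2 - \Expect(\vct{z}_k^\adj\vct{v})^2$ is a priori bounded only by $\norm{\vct{z}_k}^2 \sim R$, so Hoeffding costs a variance factor $R^2$ that the entropy term $r^2$ from the net cannot offset. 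Your claimed sub-Gaussian tail $\exp(-t^2/(\cnst{C}\,p^2(1-p)^2 n))$ does not follow from the inputs you have listed.

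The idea you are missing is \emph{hypercontractivity of low-degree Boolean polynomials}. The linear form $\vct{z}^\adj\vct{u}$ is a degree-two polynomial in the underlying $\textsc{SBern}(p)$ bits, so Fact~\ref{fact:hyper} bounds the $L^4/L^2$ ratio $h(\vct{z}) = \max_{\vct{u}} \Expect(\vct{z}^\adj\vct{u})^4 / [\Expect(\vct{z}^\adj\vct{u})^2]^2$ by $81\,p^{-2}(1-p)^{-2}$, a quantity independent of $r$ and $R$. Oliveira's lower-tail inequality (Fact~\ref{fact:oliveira}) is designed to accept exactly this dimension-free $L^4/L^2$ parameter and yields a tail of the form $2\exp(d/2 - s/(162h))$; applied twice (with $d = r, h = 9/(p(1-p))$, and $d = R, h = 81/(p^2(1-p)^2)$) and combined with $r \leq R \leq r^2/2$, this gives $4\exp(r^2/4 - p^2(1-p)^2 n / 13122)$, whence $\cnst{c} = 4/13122 \geq 0.0003$. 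This hypercontractive small-ball step is the substance of the proof and has no substitute in your outline.

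Finally, the deterministic input you flag as the main obstacle --- a quantitative bound $\lambda_{\min}(\Expect\mtx{M}) \geq \cnst{c}' p^2(1-p)^2 n$ --- is not what Theorem~\ref{thm:unbalanced} requires. Oliveira's inequality needs only \emph{nonsingularity} of $\Expect[\vct{z}\vct{z}^\adj]$; the factor $p^2(1-p)^2$ enters entirely through the hypercontractive parameter $h$. A quantitative spectral lower bound of the kind you describe does appear in the paper (Claim~\ref{claim:min-eig}), but it drives the matrix-Chernoff argument of Theorem~\ref{thm:unbalanced-chernoff}, not the present theorem.
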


\noindent
The proof of Theorem~\ref{thm:unbalanced} begins in Section~\ref{sec:proof-overview}
and continues in Section~\ref{sec:oliveira}.

Here is the key consequence of Theorem~\ref{thm:unbalanced}.
When $r + 3 \leq p(1-p) \sqrt{\cnst{c} n}$, there is a positive probability that
a random set $F \sim \textsc{Face}(p,r,n)$ is a simplicial face of $\coll{E}_n$
with dimension $r - 1$.  Fact~\ref{fact:simp-dim} shows that this bound is
qualitatively optimal when $p = 0.5$.

We see that Theorem~\ref{thm:unbalanced} removes a parasitic logarithmic
term from Theorem~\ref{thm:balanced}.  In addition, the probability bound
in Theorem~\ref{thm:unbalanced} is significantly stronger.  On the other
hand, the relative size of the constants ensures that Theorem~\ref{thm:balanced}
gives a quantitative benefit for any realistic dimension $n$.

\subsection{Counting Faces}

Theorems~\ref{thm:balanced},~\ref{thm:unbalanced-chernoff},
and~\ref{thm:unbalanced} also have an enumerative interpretation.
Draw a random vector $\vct{\xi} \sim \textsc{SBern}(0.5, n)$.
Then $\vct{\xi}$ is uniformly distributed over the set $\{ \pm 1 \}^n$
of cut vectors, and Fact~\ref{fact:vertices} implies that $\vct{\xi\xi}^\adj$
is a uniformly random vertex of the elliptope $\coll{E}_n$.
These observations yield an alternative procedure for
drawing a random set $F$ from the model $\textsc{Face}(0.5,r,n)$:
Let $F$ be the convex hull of $r$ vertices of $\coll{E}_n$,
chosen uniformly at random, with replacement.\footnote{In our parameter regime, it is unlikely that any vertex of $\coll{E}_n$
is chosen more than once, so this model is not substantially different from drawing
vertices without replacement.}
We obtain roughly $( \econst 2^{n-1} / r )^r$
different sets in this manner.
When $r \ll \sqrt{n}$, most of these sets are simplicial faces of $\coll{E}_n$.

\subsection{Related Work}

We can articulate the heuristic that a ``small'' collection of ``generic''
vertices of the elliptope induces a simplicial face.
Theorems~\ref{thm:balanced},~\ref{thm:unbalanced-chernoff}, and~\ref{thm:unbalanced}
are all instantiations of this principle.
In fact, this idea is already visible in a combinatorial construction
of Laurent \& Poljak~\cite[Cor.~4.5]{LP96:Facial-Structure}.

\begin{fact}[Combinatorial Construction of Simplicial Faces] \label{fact:comb-face}
Let $\vct{c}_1, \dots, \vct{c}_r \in \{ \pm 1 \}^n$
be cut vectors in \term{general position}.  That is,
\begin{equation*} \label{eqn:general-position}
\bigg[ \bigodot_{i \in I} (\vct{1} + \vct{c}_i) \bigg] \odot
\bigg[ \bigodot_{i \notin I} (\vct{1} - \vct{c}_i) \bigg]
\neq \vct{0}
\quad\text{for each subset $I$ of $\{1, \dots, r\}$.}
\end{equation*}
Then $
\conv\{ \vct{c}_1{\vct{c}_1}^\adj, \dots, \vct{c}_r {\vct{c}_r}^\adj \}
$
is a simplicial face of the elliptope $\coll{E}_n$.
\end{fact}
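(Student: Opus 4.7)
The plan is to exhibit $F$ as an exposed face of $\coll{E}_n$ and then appeal to Fact~\ref{fact:simp-form} to upgrade this to a simplicial face. Form the matrix $\mtx{C} \in \R^{n \times r}$ whose columns are $\vct{c}_1, \ldots, \vct{c}_r$, and let $V := \range(\mtx{C})$. The general position hypothesis says precisely that every sign pattern $\epsilon \in \{\pm 1\}^r$ occurs as some row of $\mtx{C}$; comparing the all-ones row with the $r$ rows obtained by flipping a single coordinate shows that the rows of $\mtx{C}$ span $\R^r$, so $\mtx{C}$ has full column rank and $\dim V = r$.

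Next, let $\mtx{A} := \Id - \Proj_V$ be the orthogonal projector onto $V^\perp$. Then $\mtx{A} \psdge 0$ with $\ker(\mtx{A}) = V$, so for every $\mtx{X} \in \coll{E}_n$ we have $\trace(\mtx{A}\mtx{X}) \geq 0$, with equality if and only if $\range(\mtx{X}) \subseteq V$. The linear functional $\mtx{X} \mapsto \trace(\mtx{A}\mtx{X})$ therefore attains its minimum over $\coll{E}_n$ on the exposed face
$$
F^* := \{\mtx{X} \in \coll{E}_n : \range(\mtx{X}) \subseteq V \}.
$$
Each generator $\vct{c}_i \vct{c}_i^\adj$ lies in $F^*$, so $F \subseteq F^*$. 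The crux of the proof is the reverse containment $F^* \subseteq F$.

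Fix $\mtx{X} \in F^*$. Since $\mtx{X} \psdge 0$ has range in $V$ and $\mtx{C}$ has full column rank, there is a unique $\mtx{M} \in \Sym_+^r$ with $\mtx{X} = \mtx{C}\mtx{M}\mtx{C}^\adj$. Writing $\vct{r}_k \in \{\pm 1\}^r$ for the $k$th row of $\mtx{C}$, the diagonal constraint $\diag(\mtx{X}) = \vct{1}$ becomes $\vct{r}_k^\adj \mtx{M} \vct{r}_k = 1$ for every $k$. By general position the rows $\vct{r}_k$ exhaust all of $\{\pm 1\}^r$, so the quadratic form $Q(\epsilon) := \epsilon^\adj \mtx{M} \epsilon$ is identically $1$ on the Boolean cube. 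Expanding
$$
Q(\epsilon) \;=\; \trace(\mtx{M}) \;+\; 2 \sum_{i < j} \mtx{M}_{ij}\, \epsilon_i \epsilon_j
$$
and using the linear independence of the Walsh characters $\epsilon \mapsto \epsilon_i \epsilon_j$ from the constant function on $\{\pm 1\}^r$ forces $\mtx{M}_{ij} = 0$ for $i \neq j$ and $\trace(\mtx{M}) = 1$. Hence $\mtx{M} = \diag(\theta)$ for some $\theta$ in the probability simplex, and $\mtx{X} = \sum_i \theta_i \vct{c}_i \vct{c}_i^\adj \in F$, proving $F^* \subseteq F$.

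With $F = F^*$ identified as an exposed face of $\coll{E}_n$ generated by vertices, Fact~\ref{fact:simp-form} certifies that $F$ is simplicial. The same full column rank of $\mtx{C}$ also forces the rank-one matrices $\vct{c}_i\vct{c}_i^\adj$ to be linearly (hence affinely) independent, so $\dim F = r-1$. The main obstacle in the argument is the Fourier step that pins $\mtx{M}$ down to a diagonal matrix; everything else is the standard exposed-face machinery for the positive semidefinite cone, and the general position hypothesis is doing exactly the work needed to make the Boolean Fourier expansion invertible.
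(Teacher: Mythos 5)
Your proof is correct, and it follows a genuinely different route from the one the paper gestures at. The paper cites Laurent \& Poljak for this statement and remarks that their proof, like the paper's probabilistic arguments, goes through the rank-based sufficient condition (Fact~\ref{fact:lp-simplicial}): under the general-position hypothesis, every sign pattern in $\{\pm 1\}^r$ appears among the rows of the matrices $\mtx{W}$ and $\mtx{Z}$ from~\eqref{eqn:WZ}, and linear independence of the Walsh functions $1, \epsilon_i\epsilon_j$ then forces both matrices to have full column rank. You instead bypass that black box entirely: you construct an explicit exposing functional $\mtx{X} \mapsto \trace\bigl((\Id - \Proj_V)\mtx{X}\bigr)$ with $V = \range(\mtx{C})$, identify the resulting exposed face $F^*$ as the set of correlation matrices with range in $V$, and then prove $F^* \subseteq F$ by writing each such matrix as $\mtx{C}\mtx{M}\mtx{C}^\adj$ and using the diagonal constraint together with the Walsh expansion of $\epsilon \mapsto \epsilon^\adj\mtx{M}\epsilon$ to pin down $\mtx{M}$ as a diagonal matrix in the probability simplex. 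What each route buys: your argument is self-contained and gives a crisp geometric description of the face (it is precisely $\{\mtx{X} \in \coll{E}_n : \range(\mtx{X}) \subseteq V\}$); the paper's route is more economical within the larger program, since Fact~\ref{fact:lp-simplicial} is the same sufficient condition reused for the probabilistic theorems, and it applies even when the vectors are not in full general position. The underlying Fourier-analytic ingredient (linear independence of $\{1\} \cup \{\epsilon_i\epsilon_j\}_{i<j}$ over the Boolean cube) is the same in both proofs; you have just pushed it through the exposed-face machinery rather than through the rank condition. Two small remarks: invoking Fact~\ref{fact:simp-form} at the end is slightly redundant, since your affine-independence observation already yields simpliciality directly; and it is worth stating explicitly that $\range(\mtx{X}) \subseteq V$ for $\mtx{X} \psdge 0$ forces the unique $\mtx{M}$ with $\mtx{X} = \mtx{C}\mtx{M}\mtx{C}^\adj$ to itself be psd (e.g., $\mtx{M} = \mtx{C}^{\psinv}\mtx{X}(\mtx{C}^{\psinv})^\adj$), which you need when concluding that $\theta$ has nonnegative entries.
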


Fact~\ref{fact:comb-face} only has content when the number $r$ of cut vectors
satisfies $r \leq  \log_2 n $.  In contrast,
our probabilistic argument is valid in a wider parameter range.
Theorem~\ref{thm:balanced} operates in the regime $r \leq \sqrt{n/\log n}$,
and Theorem~\ref{thm:unbalanced} has implications when $r \leq \sqrt{\cnst{c} n}$.
Even so, our analysis depends on the same sufficient condition
(Fact~\ref{fact:lp-simplicial}, below)
that Laurent \& Poljak use to establish Fact~\ref{fact:comb-face}.

\section{Proof Strategy}
\label{sec:proof-overview}

This section outlines our technique for proving
Theorems~\ref{thm:balanced},~\ref{thm:unbalanced-chernoff}, and~\ref{thm:unbalanced}.
The argument begins with a sufficient condition,
due to Laurent \& Poljak~\cite{LP96:Facial-Structure},
for a family of cut vectors to generate a simplicial face.
The challenge is to understand the probability
that a collection of \emph{random} cut vectors satisfies the sufficient condition.
We explain how to reduce this question to a problem
that can be addressed using matrix concentration
inequalities.  In Sections~\ref{sec:chernoff} and~\ref{sec:oliveira},
we carry out this program.

\subsection{Deterministic Condition for a Simplicial Face}

The first ingredient in our argument is a sufficient
condition~\cite[Thm.~4.2]{LP96:Facial-Structure} for a family of cut vectors to generate
a simplicial face of the elliptope.

\begin{fact}[Sufficient Condition for a Simplicial Face] \label{fact:lp-simplicial}
Assume that $r \geq 2$.  Let $\vct{c}_1, \dots, \vct{c}_r \in \{ \pm 1 \}^n$ be cut vectors,
and consider the set
$$
F := \conv\big\{ \vct{c}_1 {\vct{c}_1}^\adj, \dots, \vct{c}_r {\vct{c}_r}^\adj \big\}
	\subset \coll{E}_n.
$$
Define $R := 1 + r(r-1)/2$, and introduce the two matrices
\begin{equation} \label{eqn:WZ}
\begin{aligned}
\mtx{W} := \mtx{W}(\vct{c}_1, \dots, \vct{c}_r) &:= \left[ \begin{array}{ccc} \vct{c}_1 & \dots & \vct{c}_r \end{array}\right] \in \R^{n \times r}; \\
\mtx{Z} := \mtx{Z}(\vct{c}_1, \dots, \vct{c}_r) &:= \left[ \begin{array}{c|ccccc} \vct{1}
	& \vct{c}_1 \odot \vct{c}_2 & \dots & \vct{c}_i \odot \vct{c}_j & \dots & \vct{c}_{r-1} \odot \vct{c}_r
	\end{array} \right] \in \R^{n \times R}
	\quad\text{where $1 \leq i < j \leq r$.}
\end{aligned}
\end{equation}
If $\mtx{W}$ and $\mtx{Z}$ both have full column rank,
then $F$ is a simplicial face of $\coll{E}_n$
with dimension $r-1$.
\end{fact}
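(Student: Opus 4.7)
The plan is to establish two things: first, that the $r$ candidate vertices $\vct{c}_i{\vct{c}_i}^\adj$ are affinely independent, so that $F$ is a simplex of dimension $r-1$; second, that $F$ is actually a face of $\coll{E}_n$. For the first, any relation $\sum_i \alpha_i\,\vct{c}_i{\vct{c}_i}^\adj = \mtx{0}$ can be rewritten as $\mtx{W}\diag(\alpha)\mtx{W}^\adj = \mtx{0}$; full column rank of $\mtx{W}$ makes the map $\mtx{Q}\mapsto \mtx{W}\mtx{Q}\mtx{W}^\adj$ injective on $\Sym^r$, so $\diag(\alpha) = \mtx{0}$ and all $\alpha_i = 0$. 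Linear (hence affine) independence follows.

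For the face property I work at the centroid
$$
\mtx{M} \defeq \frac{1}{r}\sum_{i=1}^r \vct{c}_i{\vct{c}_i}^\adj = \frac{1}{r}\mtx{W}\mtx{W}^\adj \in \relint F.
$$
It suffices to show that the smallest face of $\coll{E}_n$ containing $\mtx{M}$ equals $F$ itself. So suppose $\mtx{M} = \theta\mtx{X} + (1-\theta)\mtx{Y}$ with $\mtx{X},\mtx{Y} \in \coll{E}_n$ and $\theta \in (0,1)$. Then $\mtx{X}$ is psd with range contained in $\range(\mtx{M}) = \range(\mtx{W})$, and full column rank of $\mtx{W}$ delivers a unique $\mtx{Q} \in \Sym_+^r$ with $\mtx{X} = \mtx{W}\mtx{Q}\mtx{W}^\adj$.

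Now impose the diagonal constraint $\diag(\mtx{X}) = \vct{1}$. Entrywise,
$$
1 = (\mtx{W}\mtx{Q}\mtx{W}^\adj)_{kk} = \sum_{i,j}(\vct{c}_i\odot\vct{c}_j)_k\,Q_{ij} = \trace(\mtx{Q}) + 2\sum_{i<j}(\vct{c}_i\odot\vct{c}_j)_k\,Q_{ij},
$$
using $\vct{c}_i\odot\vct{c}_i = \vct{1}$ together with the symmetry of $\mtx{Q}$. Reading this as a vector equation in $\R^n$ yields $\mtx{Z}\vct{v} = \vct{0}$ for $\vct{v} \defeq (\trace(\mtx{Q})-1,\,2Q_{12},\dots,2Q_{r-1,r})^\adj \in \R^R$. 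Full column rank of $\mtx{Z}$ forces $\vct{v} = \vct{0}$: the matrix $\mtx{Q}$ is diagonal with trace $1$ and nonnegative entries. Hence $\mtx{X} = \sum_i Q_{ii}\,\vct{c}_i{\vct{c}_i}^\adj \in F$, and the same argument places $\mtx{Y} \in F$. Conversely, each vertex $\vct{c}_i{\vct{c}_i}^\adj$ lies in the smallest face of $\coll{E}_n$ containing $\mtx{M}$ (write $\mtx{M}$ as a convex combination with all positive weights that includes it), so this smallest face also contains all of $F$, and equality follows.

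The main conceptual step is the translation of the apparently quadratic constraint $\diag(\mtx{W}\mtx{Q}\mtx{W}^\adj) = \vct{1}$ into the homogeneous linear system $\mtx{Z}\vct{v} = \vct{0}$ against the auxiliary matrix of Hadamard products; the matrix $\mtx{Z}$ is built precisely so that this dictionary is exact, and that is where I expect the real content to sit. The two full-rank hypotheses then play clearly separated roles: full column rank of $\mtx{W}$ governs the geometric side (dimension and the parametrization $\mtx{X} = \mtx{W}\mtx{Q}\mtx{W}^\adj$), while full column rank of $\mtx{Z}$ governs the facial side by forcing $\mtx{Q}$ to be diagonal with trace one.
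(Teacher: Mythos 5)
Your proof is correct. Note that the paper does not prove this statement; it is recorded as a Fact citing Laurent \& Poljak~\cite[Thm.~4.2]{LP96:Facial-Structure}. Your argument is a clean, self-contained reconstruction of the standard facial parametrization for the elliptope: matrices in the minimal face through the centroid $\mtx{M}=\tfrac{1}{r}\mtx{W}\mtx{W}^\adj$ are written $\mtx{W}\mtx{Q}\mtx{W}^\adj$ with $\mtx{Q}\in\Sym_+^r$ (full column rank of $\mtx{W}$ both makes this parametrization available and yields the affine independence of the vertices), and the diagonal constraint becomes the homogeneous system $\mtx{Z}\vct{v}=\vct{0}$, which full column rank of $\mtx{Z}$ collapses to $\mtx{Q}$ diagonal with unit trace. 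This is the same circle of ideas as Laurent--Poljak's Lemma~2.4 (dimension of the minimal face via $\dim\lspan\{\vct{w}_k\vct{w}_k^\adj\}$) specialized to this setting, so the two routes buy the same thing; yours simply avoids invoking the general lemma.
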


For our purposes, it is more natural to consider the dual form of the condition
that $\mtx{W}$ and $\mtx{Z}$ have full column rank.  Express these two matrices
in terms of their rows:
$$
\mtx{W} = \begin{bmatrix} {\vct{w}_1}^\adj \\ \vdots \\ {\vct{w}_n}^\adj \end{bmatrix} \in \R^{n \times r}
\quad\text{and}\quad
\mtx{Z} = \begin{bmatrix} {\vct{z}_1}^\adj \\ \vdots \\ {\vct{z}_n}^\adj \end{bmatrix} \in \R^{n \times R}.
$$
Suppose that
\begin{equation} \label{eqn:lp-dual}
\lspan\{ \vct{w}_1, \dots, \vct{w}_n \} = \R^r
\quad\text{and}\quad
\lspan\{ \vct{z}_1, \dots, \vct{z}_n \} = \R^R.
\end{equation}
Then Fact~\ref{fact:lp-simplicial} implies that the $F$ is a simplicial face of $\coll{E}_n$.

\begin{remark}[Variant Sufficient Condition] \label{rem:var-suff}
Let $R' := r(r-1)/2$.
Using the same notation as in Fact~\ref{fact:lp-simplicial}, we define the matrix
\begin{equation*} \label{eqn:Y}
\mtx{Y} := \mtx{Y}(\vct{c}_1, \dots, \vct{c}_r) := \left[ \begin{array}{ccccc}
	\vct{1} - \vct{c}_1 \odot \vct{c}_2 & \dots &
	\vct{1} - \vct{c}_i \odot \vct{c}_j & \dots &
	\vct{1} - \vct{c}_{r-1} \odot \vct{c}_r \end{array} \right] \in \R^{n \times R'}.
\end{equation*}
The indices lie in the range $1 \leq i < j \leq r$.
Using the fact that the $\vct{c}_i$ are cut vectors,
Laurent \& Poljak~\cite[Condition (iii), p.~540]{LP96:Facial-Structure} show that $\mtx{Y}$
has full column rank if and only if $\mtx{Z}$ has full column rank.
\end{remark}

\subsection{Sufficient Condition for the Random Model}
\label{sec:suff-rnd}

Fix the balance parameter $p \in (0,1)$, the number $r$ of vertices,
and the ambient dimension $n$.  Draw independent random vectors
$\vct{\xi}_1, \dots, \vct{\xi}_r$ from the distribution $\textsc{SBern}(p, n)$,
and construct the random set
$$
F := \conv\big\{ \vct{\xi}_1{\vct{\xi}_1}^\adj, \dots, \vct{\xi}_r {\vct{\xi}_r}^\adj \big\}
	\sim \textsc{Face}(p,r,n).
$$
We need to determine the probability that $\vct{\xi}_1, \dots, \vct{\xi}_r$
satisfy the sufficient condition from Fact~\ref{fact:lp-simplicial}.
This gives a lower bound on the probability that $F$ is a simplicial face of $\coll{E}_n$.

We can check the dual form~\eqref{eqn:lp-dual} of the sufficient condition.
Consider the matrix $\mtx{W} = \mtx{W}(\vct{\xi}_1, \dots, \vct{\xi}_r) \in \R^{n \times r}$,
defined in~\eqref{eqn:WZ}.
Observe that the coordinates of the $\vct{\xi}_i$ are iid,
so the matrix $\mtx{W}$ has iid rows.
More precisely, the $n$ rows of $\mtx{W}$ are iid copies of a random vector
$\vct{w} \in \R^r$ where $\vct{w} \sim \textsc{SBern}(p, r)$.

In a similar vein, consider the matrix $\mtx{Z} = \mtx{Z}(\vct{\xi}_1, \dots, \vct{\xi}_r) \in \R^{n\times R}$,
defined in~\eqref{eqn:WZ}.
Introduce a random vector $\vct{z} \in \R^R$, whose entries are derived
from the random vector $\vct{w} \in \R^r$ as follows.
\begin{equation} \label{eqn:z-vec}
z_0 := 1
\quad\text{and}\quad
z_{ij} := w_i w_j
\quad\text{for $1 \leq i < j \leq r$.}
\end{equation}
Then the $n$ rows of $\mtx{Z}$ are iid copies of the random vector $\vct{z}$.

Therefore, to verify~\eqref{eqn:lp-dual},
we must compute the probability that $n$ iid copies of the random vector $\vct{w}$
span the space $\R^r$
and that $n$ iid copies of the random vector $\vct{z}$
span the space $\R^R$.
The following proposition summarizes this discussion.

\begin{proposition}[Sufficient Condition for Random Model] \label{prop:suff-rdm}
Fix the balance parameter $p \in (0,1)$, the number $r$ of vertices where $r \geq 2$,
and the dimension $n$.  Define $R := 1 + r(r-1)/2$.  Introduce a random vector
$\vct{w} \sim \textsc{SBern}(p,r)$, and define $\vct{z} \in \R^R$ by the formula~\eqref{eqn:z-vec}.

Draw iid copies $\vct{w}_1, \dots, \vct{w}_n$ of the random vector $\vct{w} \in \R^r$
and iid copies $\vct{z}_1, \dots, \vct{z}_n$ of the random vector $\vct{z} \in \R^R$.
Then a random set $F \sim \textsc{Face}(p,r,n)$ satisfies
$$
\Prob{ \text{$F$ is \emph{not} a simplicial face of $\coll{E}_n$} }
	\quad\leq\quad \Prob{ \lspan\{ \vct{w}_1, \dots, \vct{w}_n \} \neq \R^r }
	\ + \ \Prob{ \lspan\{ \vct{z}_1, \dots, \vct{z}_n \} \neq \R^R }.
$$
\end{proposition}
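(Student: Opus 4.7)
The plan is to assemble Proposition~\ref{prop:suff-rdm} directly from Fact~\ref{fact:lp-simplicial} and a union bound, since most of the ingredients were already laid out in the discussion preceding the statement. The argument is essentially a bookkeeping exercise, and I expect no real obstacles; the only thing to be careful about is identifying the row distributions of $\mtx{W}$ and $\mtx{Z}$ correctly so that the dual form of the rank condition applies.

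First I would instantiate Fact~\ref{fact:lp-simplicial} with the random cut vectors $\vct{\xi}_1, \dots, \vct{\xi}_r$, forming the random matrices $\mtx{W} = \mtx{W}(\vct{\xi}_1, \dots, \vct{\xi}_r) \in \R^{n\times r}$ and $\mtx{Z} = \mtx{Z}(\vct{\xi}_1, \dots, \vct{\xi}_r) \in \R^{n\times R}$ as in~\eqref{eqn:WZ}. By the fact, on the event that $\mtx{W}$ and $\mtx{Z}$ both have full column rank, the set $F$ is a simplicial face of $\coll{E}_n$ of dimension $r-1$. Passing to the complement and invoking a union bound gives
\begin{equation*}
\Prob{ \text{$F$ is not a simplicial face of $\coll{E}_n$} }
\ \leq\ \Prob{ \rank(\mtx{W}) < r } + \Prob{ \rank(\mtx{Z}) < R }.
\end{equation*}

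Next I would translate each rank event into a spanning event, using the elementary fact that a matrix has full column rank if and only if its rows span the ambient column space. This is the dual form~\eqref{eqn:lp-dual} already noted in the text. Thus the first term equals $\Prob{ \lspan\{ \vct{w}_1, \dots, \vct{w}_n \} \neq \R^r }$, where $\vct{w}_1, \dots, \vct{w}_n$ are the rows of $\mtx{W}$; and the second term equals $\Prob{ \lspan\{ \vct{z}_1, \dots, \vct{z}_n \} \neq \R^R }$, where $\vct{z}_1, \dots, \vct{z}_n$ are the rows of $\mtx{Z}$.

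Finally I would verify that the rows have the advertised distributions. Because each $\vct{\xi}_i$ has iid coordinates drawn from $\textsc{SBern}(p)$, and because distinct rows of $\mtx{W}$ (respectively $\mtx{Z}$) depend on disjoint coordinates of the $\vct{\xi}_i$, the rows are themselves iid. The $k$-th row of $\mtx{W}$ is $((\vct{\xi}_1)_k, \dots, (\vct{\xi}_r)_k)$, which is distributed as $\textsc{SBern}(p,r)$, matching $\vct{w}$. For $\mtx{Z}$, the $k$-th row has a leading $1$ followed by the entries $(\vct{\xi}_i)_k (\vct{\xi}_j)_k$ for $1 \leq i < j \leq r$, which is precisely the law of $\vct{z}$ defined in~\eqref{eqn:z-vec} in terms of $\vct{w}$. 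Substituting these distributional identifications into the union bound yields the claim. The hardest step, such as it is, is simply making sure the row-vs-column distinction and the joint distributions of entries in $\vct{z}$ are recorded cleanly, since the entries of $\vct{z}$ are highly dependent (though this dependence plays no role at this level of the argument).
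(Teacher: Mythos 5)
Your argument is correct and follows essentially the same route as the paper: the proposition is in fact just a summary of the discussion in Section~\ref{sec:suff-rnd}, which likewise invokes Fact~\ref{fact:lp-simplicial} in its dual form~\eqref{eqn:lp-dual}, identifies the iid rows of $\mtx{W}$ and $\mtx{Z}$ with $\vct{w}$ and $\vct{z}$, and (implicitly) applies a union bound over the two spanning failures. Your version merely makes the union bound explicit, which is a fine piece of bookkeeping.
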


The easiest way to complete the calculations required
by Proposition~\ref{prop:suff-rdm} is to invoke
methods from the field of matrix concentration
inequalities~\cite{Tro15:Introduction-Matrix}.
Among other things, this theory gives practical estimates
for the minimum singular value of a random matrix with iid rows.
This type of result leads directly to a bound on the probability
that an iid family of random vectors spans a linear space.
In the next two sections, we complete our program by
combining Proposition~\ref{prop:suff-rdm} with
two different types of matrix concentration.

\begin{remark}[Variant Sufficient Condition for Random Model] \label{rem:var-suff-rdm}
Instate the notation from Proposition~\ref{prop:suff-rdm}.
Define $R' := r(r-1)/2$, and derive a random vector $\vct{y} \in \R^{R'}$
from the vector $\vct{w} \in \R^r$ as follows.
\begin{equation} \label{eqn:y-vec}
y_{ij} := 1 - w_i w_j
\quad\text{for $1 \leq i < j \leq r$.}
\end{equation}
In view of Fact~\ref{fact:lp-simplicial}, Remark~\ref{rem:var-suff},
and Proposition~\ref{prop:suff-rdm},
$$
\Prob{ \text{$F$ is \emph{not} a simplicial face of $\coll{E}_n$} }
	\quad\leq\quad \Prob{ \lspan\{ \vct{w}_1, \dots, \vct{w}_n \} \neq \R^r }
	\ + \ \Prob{ \lspan\{ \smash{\vct{y}_1, \dots, \vct{y}_n} \} \neq \R^{\smash{R'}} }.
$$
\end{remark}

\section{Method 1: The Matrix Chernoff Inequality}
\label{sec:chernoff}

This section contains the proofs of Theorem~\ref{thm:balanced} and~\ref{thm:unbalanced-chernoff}.
The approach relies on a well-known consequence of the matrix Chernoff
inequality~\cite{AW02:Strong-Converse,Tro12:User-Friendly}.

\subsection{Tools}

Let us present a specialization
of the lower tail bound from the matrix Chernoff inequality.
This result was first obtained by Ahlswede \& Winter~\cite[Thm.~19]{AW02:Strong-Converse},
and it later received a significant upgrade~\cite[Thm.~1.1]{Tro12:User-Friendly}.

\begin{fact}[Ahlswede \& Winter; Tropp] \label{fact:chernoff}
Consider a random vector $\vct{x} \in \R^d$
with second-moment matrix $\mtx{\Sigma} := \Expect[ \vct{xx}^\adj]$.
Assume that
$$
\lambda := \lambda_{\min}(\mtx{\Sigma})
\quad\text{and}\quad
\normsq{\vct{x}} \leq B
\quad\text{almost surely.}
$$
Draw iid copies $\vct{x}_1, \dots, \mtx{x}_s$ of the random vector $\vct{x}$.
Then
$$
\Prob{ \lspan\{ \vct{x}_1, \dots, \vct{x}_s \} \neq \R^d }
	\leq d \cdot \exp\left( \frac{-\lambda s}{2B} \right).
$$
\end{fact}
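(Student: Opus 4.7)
My plan is to reduce the spanning question to an eigenvalue tail bound and then invoke the matrix Laplace transform method. The observation is that $\lspan\{\vct{x}_1,\dots,\vct{x}_s\} = \R^d$ if and only if the sample second-moment matrix $\mtx{S} := \sum_{i=1}^s \vct{x}_i {\vct{x}_i}^\adj$ is invertible, i.e., $\lambda_{\min}(\mtx{S}) > 0$. So I need a lower tail bound on the smallest eigenvalue of $\mtx{S}$ at threshold zero, and the ambient tool is the matrix Chernoff machinery in the style of Ahlswede--Winter.

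For any $\theta > 0$, Markov's inequality in its matrix Laplace-transform form gives
\[
\Prob{\lambda_{\min}(\mtx{S}) \leq 0} \ = \ \Prob{\lambda_{\max}(-\mtx{S}) \geq 0} \ \leq \ \Expect \trace \exp(-\theta \mtx{S}),
\]
using $\exp(\theta \lambda_{\max}(-\mtx{S})) = \lambda_{\max}(\exp(-\theta\mtx{S})) \leq \trace \exp(-\theta \mtx{S})$. Iterating the Golden--Thompson inequality over the $s$ iid summands (the Ahlswede--Winter master trick) then yields
\[
\Expect \trace \exp(-\theta \mtx{S}) \ \leq \ d \cdot \lambda_{\max}\!\bigl( \Expect \exp(-\theta \vct{x}\vct{x}^\adj) \bigr)^{s}.
\]
It remains to control the matrix moment generating function on the right.

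Here I would exploit the rank-one structure of $\vct{xx}^\adj$. Functional calculus gives the identity $\exp(-\theta \vct{uu}^\adj) = \Id + \vct{uu}^\adj \cdot (e^{-\theta \normsq{\vct{u}}} - 1)/\normsq{\vct{u}}$, so the scalar inequality $e^{-t} \leq 1 - t/2$ on $t \in [0,1]$, applied with $t = \theta\normsq{\vct{x}} \leq \theta B \leq 1$, lifts to the semidefinite inequality $\exp(-\theta \vct{xx}^\adj) \psdle \Id - (\theta/2)\vct{xx}^\adj$ whenever $\theta \in (0, 1/B]$. Taking expectations gives $\Expect \exp(-\theta \vct{xx}^\adj) \psdle \Id - (\theta/2)\mtx{\Sigma}$, and applying $1 - x \leq e^{-x}$ to the eigenvalues of $(\theta/2)\mtx{\Sigma}$ upgrades this to $\Expect \exp(-\theta \vct{xx}^\adj) \psdle \exp(-(\theta/2)\mtx{\Sigma})$. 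Hence $\lambda_{\max}(\Expect \exp(-\theta \vct{xx}^\adj)) \leq e^{-\theta \lambda/2}$, and chaining with the master bound yields $\Prob{\lambda_{\min}(\mtx{S}) \leq 0} \leq d\, e^{-s\theta\lambda/2}$ for every $\theta \in (0, 1/B]$. Setting $\theta = 1/B$ produces the claimed bound $d \exp(-\lambda s/(2B))$.

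I expect the only real subtlety to be the MGF step---lifting the scalar inequality $e^{-t} \leq 1 - t/2$ to a semidefinite-ordering statement between matrix exponentials, which uses both the rank-one structure of $\vct{xx}^\adj$ and the almost-sure bound $\normsq{\vct{x}} \leq B$ (to stay in the range $\theta B \leq 1$). Everything else is standard Laplace-transform / Golden--Thompson bookkeeping. The factor $1/2$ in the final exponent is the price paid for the Ahlswede--Winter scalarization via $\lambda_{\max}$; a sharper Lieb-inequality-based argument would replace this by factor $1$, but that refinement is unnecessary for the present application.
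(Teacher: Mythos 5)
Your proof is correct. It is also a genuinely different route from the paper's: the paper does not reprove the fact at all, but simply cites the simplified form of the matrix Chernoff bound for the minimum eigenvalue (Remark~5.3 of \cite{Tro12:User-Friendly}) applied to $\mtx{X}_i = \vct{x}_i\vct{x}_i^\adj$ with tail parameter $t=0$. You have instead reconstructed a self-contained derivation in the Ahlswede--Winter style, and the bookkeeping checks out at each step: the equivalence of spanning with $\lambda_{\min}(\mtx{S})>0$; the Markov/Laplace reduction via $\exp(\theta\lambda_{\max}(-\mtx{S}))\leq\trace\exp(-\theta\mtx{S})$; the Golden--Thompson iteration to pull out $\lambda_{\max}(\Expect\exp(-\theta\vct{xx}^\adj))^s$; the rank-one functional calculus and the scalar inequality $e^{-t}\leq 1-t/2$ on $[0,1]$ (valid because $\theta\normsq{\vct{x}}\leq\theta B\leq 1$); the lift $\Id - (\theta/2)\mtx{\Sigma}\psdle\exp(-(\theta/2)\mtx{\Sigma})$; and the final choice $\theta=1/B$. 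The upside of your version is that the fact becomes self-contained rather than an external citation; the cost is length. One small correction to your closing remark: for iid summands the Ahlswede--Winter master bound and the Lieb/Tropp master bound coincide (both yield $d\cdot\lambda_{\max}(\Expect\exp(-\theta\vct{xx}^\adj))^s$), so the factor of $1/2$ in the exponent is not a penalty for Golden--Thompson scalarization. It comes from using the elementary estimate $e^{-t}\leq 1-t/2$ instead of the tight secant bound $e^{-\theta t}\leq 1+\tfrac{t}{B}(e^{-\theta B}-1)$ on $[0,B]$; with the latter and an optimal $\theta$ one recovers the sharper exponent $-\lambda s/B$ from within the same Ahlswede--Winter framework.
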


\begin{proof}[Proof Sketch]
This statement follows immediately by applying
the simplified form~\cite[Rem.~5.3]{Tro12:User-Friendly}
of the matrix Chernoff inequality for the minimum eigenvalue
to the random matrices $\mtx{X}_i = \vct{x}_i {\vct{x}_i}^\adj$.
We set the tail parameter $t = 0$.
\end{proof}

\begin{remark}[Refined Probability Bounds]
Fact~\ref{fact:chernoff} gives a good estimate for 
how large $s$ must be to make the probability bound nontrivial.
To obtain more accurate bounds when
$s$ is larger, one must combine the matrix Chernoff
bound with a scalar concentration inequality.
We omit these developments.
\end{remark}

\subsection{Proof of Theorem~\ref{thm:balanced}}

For this result, the balance parameter $p = 0.5$.
We may also assume that $r \geq 2$, or else the result
holds trivially because of Fact~\ref{fact:vertices}.
We instate the notation of Proposition~\ref{prop:suff-rdm}.

Let $\vct{w}_1, \dots, \vct{w}_n$ be iid copies of $\vct{w} \in \R^r$.
Fact~\ref{fact:chernoff} readily implies that
\begin{equation} \label{eqn:w-bd-chernoff}
\Prob{ \lspan\{ \vct{w}_1, \dots, \vct{w}_n \} \neq \R^r }
	\leq r \cdot \exp\left( \frac{- n}{2r} \right).
\end{equation}
Indeed, since $\vct{w} \sim \textsc{SBern}(0.5, r)$,
we quickly determine that $\normsq{\vct{w}} = r$ and that $\Expect[\vct{ww}^\adj] = \Id$.

Now, let $\vct{z}_1, \dots, \vct{z}_n$ be iid copies of $\vct{z} \in \R^R$.
In this case, Fact~\ref{fact:chernoff} yields
\begin{equation} \label{eqn:z-bd-chernoff}
\Prob{ \lspan\{ \vct{z}_1, \dots, \vct{z}_n \} \neq \R^R }
	\leq R \cdot \exp\left( \frac{- n}{2R} \right).
\end{equation}
To establish this point, recall that the random vector $\vct{z} \in \R^R$
is derived from $\vct{w}$ via the formula~\eqref{eqn:z-vec}.
Therefore, $\normsq{\vct{z}} = R$ and a short calculation yields
$\Expect[\vct{zz}^\adj] = \Id$.

To complete the proof, combine the
bounds~\eqref{eqn:w-bd-chernoff} and~\eqref{eqn:z-bd-chernoff}:
$$
\Prob{ \lspan\{ \vct{w}_1, \dots, \vct{w}_n \} \neq \R^r }
	+ \Prob{ \lspan\{ \vct{z}_1, \dots, \vct{z}_n \} \neq \R^R }
	\leq r^2 \cdot \exp\left(\frac{-n}{r^2}\right).
$$
We have used the relations $2r \leq r + R \leq 2R \leq r^2$,
which are valid because $R = 1 + r(r-1)/2$ and $r \geq 2$.
An application of Proposition~\ref{prop:suff-rdm}
completes the proof of Theorem~\ref{thm:balanced}.

\subsection{Proof of Theorem~\ref{thm:unbalanced-chernoff}}

As before, we may assume that $r \geq 2$.
This time, we need the alternative sufficient condition from
Remark~\ref{rem:var-suff-rdm}, and we instate the notation from this remark.
It is also productive to abbreviate $\alpha := (2p-1)^2$,
which is the squared expectation of an $\textsc{SBern}(p)$ random variable.

Let $\vct{w} \sim \textsc{SBern}(p,r)$.  It is immediate that
$\normsq{\vct{w}} = r$.  By direct calculation,
the second-moment matrix of the random vector $\vct{w} \in \R^r$
takes the form
\begin{equation} \label{eqn:Mr}
\mtx{M}_r := \Expect[ \vct{ww}^\adj ] = (1-\alpha) \cdot \Id + \alpha \cdot \mathbf{J} \in \Sym_+^r.
\end{equation}
Recall that $\mathbf{J}$ is the matrix of ones.
It follows immediately that $\lambda_{\min}(\mtx{M}_r) = 1 - \alpha$.
Fact~\ref{fact:chernoff} delivers
\begin{equation} \label{eqn:w-bd-chernoff-p}
\Prob{ \lspan\{ \vct{w}_1, \dots, \vct{w}_n \} \neq \R^r }
	\leq r \cdot \exp\left( - \frac{(1-\alpha) n}{2r} \right),
\end{equation}
where $\vct{w}_1, \dots, \vct{w}_n$ are iid copies of
the random vector $\vct{w}$.

Next, consider the random vector $\vct{y} \in \R^{R'}$,
derived from $\vct{w} \in \R^r$ via the formula~\eqref{eqn:y-vec}.
Note that $\normsq{\smash{\vct{y}}} \leq 4R'$ because the entries of $\vct{y}$
takes values in the set $\{0, 2\}$.  We assert the following bound
on the minimum eigenvalue of the second-moment matrix of $\vct{y}$.

\begin{claim} \label{claim:min-eig}
Let $\mtx{\Sigma} := \Expect[\vct{yy}^\adj]$ be the second-moment matrix of $\mtx{y}$.
Then $\lambda_{\min}(\mtx{\Sigma}) \geq (1-\alpha)^2$.
\end{claim}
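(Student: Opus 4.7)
The plan is to compute the entries of $\mtx{\Sigma}$ directly and then decompose the resulting matrix as a sum of positive-semidefinite pieces, one of which is already $(1-\alpha)^2 \, \Id$.

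First I would expand $\Sigma_{(ij),(kl)} = \Expect[(1 - w_iw_j)(1 - w_kw_l)]$ using the independence of the coordinates of $\vct{w}$ and the moment identity $\Expect[w_iw_j] = (2p-1)^2 = \alpha$ for distinct $i,j$. The four-point expectation $\Expect[w_iw_jw_kw_l]$ splits into three cases according to the intersection pattern of the index pairs $\{i,j\}$ and $\{k,l\}$: they coincide (value $1$), they share exactly one index (value $\alpha$), or they are disjoint (value $\alpha^2$). This produces three corresponding values for the entries of $\mtx{\Sigma}$, namely $2(1-\alpha)$, $1-\alpha$, and $(1-\alpha)^2$.

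Next I would reorganize those entries into the decomposition
\[
\mtx{\Sigma} \;=\; (1-\alpha)^2 \, \Id \;+\; \alpha(1-\alpha)\, \mtx{B} \;+\; (1-\alpha)^2 \, \mathbf{J},
\]
where $\mtx{B}$ is the $R' \times R'$ matrix defined by $B_{(ij),(kl)} := |\{i,j\} \cap \{k,l\}|$ and $\mathbf{J}$ is the all-ones matrix. The one non-trivial observation is that $\mtx{B}$ is a Gram matrix: if $\mtx{E} \in \R^{r \times R'}$ is the vertex-edge incidence matrix of the complete graph on $r$ vertices, then $\mtx{B} = \mtx{E}^\adj \mtx{E} \psdge 0$. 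Together with $\mathbf{J} \psdge 0$ and $\alpha(1-\alpha) \geq 0$ for $\alpha \in [0,1]$, dropping the last two summands gives $\mtx{\Sigma} \psdge (1-\alpha)^2 \, \Id$, which is the stated bound.

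The only real thought required is recognizing the ``share-one-index'' matrix as $\mtx{E}^\adj \mtx{E}$; everything else is a three-case bookkeeping exercise with the entry formulas. I do not anticipate any serious obstacle.
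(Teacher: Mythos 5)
Your proof is correct. The entry computation, the three-way case split on $|\{i,j\}\cap\{k,l\}|$, and the decomposition
\[
\mtx{\Sigma} \;=\; (1-\alpha)^2\,\Id \;+\; \alpha(1-\alpha)\,\mtx{B} \;+\; (1-\alpha)^2\,\mathbf{J}
\]
all check out, and the identification $\mtx{B} = \mtx{E}^\adj \mtx{E}$ with the vertex--edge incidence matrix of $K_r$ is exactly the right observation to certify $\mtx{B} \psdge 0$. Together with $\mathbf{J} = \vct{11}^\adj \psdge 0$ and $\alpha \in [0,1]$, the lower bound $\lambda_{\min}(\mtx{\Sigma}) \geq (1-\alpha)^2$ follows.

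Your route is genuinely different in presentation, though not in underlying content, from the paper's. The paper packages the same three-term decomposition in the language of symmetric tensor products: it defines an operator $\check{\mtx{\Sigma}}$ on $\R^r \vee \R^r$ as $(1-\alpha)^2 (\Id \vee \Id) + \alpha(1-\alpha)(\Id \vee \mathbf{J} + \mathbf{J}\vee\Id) + \tfrac{1}{2}(1-\alpha)^2 (\mathbf{J}\vee\mathbf{J})$, observes that each summand is psd because $\vee$ preserves positivity, and identifies $\mtx{\Sigma}$ as a compression of $\check{\mtx{\Sigma}}$ to the off-diagonal subspace, so that Cauchy interlacing gives the eigenvalue bound. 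Your incidence-matrix factorization $\mtx{B} = \mtx{E}^\adj\mtx{E}$ is the concrete shadow of the middle tensor term: the columns of $\mtx{E}$ are the vectors $\mathbf{e}_i + \mathbf{e}_j$, which is how $\mathbf{e}_i \vee \mathbf{e}_j$ looks in coordinates. What you gain is that the whole argument sits in $\R^{R'}$ with no auxiliary space, no $\vee$-calculus, and no appeal to restriction/interlacing --- everything is a sum of explicit Gram matrices. What the paper's formalism buys is a systematic recipe that would extend more easily to higher-degree analogues (products of three or more coordinates), where the tensor-power bookkeeping becomes the organizing principle. For the degree-two case at hand, your version is the shorter and more self-contained one.
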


\noindent
We will verify Claim~\ref{claim:min-eig} in Section~\ref{sec:min-eig}.
Granted this result, Fact~\ref{fact:chernoff} provides
\begin{equation} \label{eqn:y-bd-chernoff-p}
\Prob{ \lspan\{ \smash{\vct{y}_1, \dots, \vct{y}_n} \} \neq \R^{\smash{R'}} }
	\leq R' \cdot \exp\left( - \frac{(1-\alpha)^2 n}{8R'} \right),
\end{equation}
where $\vct{y}_1, \dots, \vct{y}_n$ are iid copies of the random vector $\vct{y}$.  

Combine~\eqref{eqn:w-bd-chernoff-p} and~\eqref{eqn:y-bd-chernoff-p} to reach
$$
\Prob{ \lspan\{ \vct{w}_1, \dots, \vct{w}_n \} \neq \R^r }
+ \Prob{ \lspan\{ \smash{\vct{y}_1, \dots, \vct{y}_n} \} \neq \R^{\smash{R'}} }
	\leq r^2 \cdot \exp\left( \frac{-(1-\alpha)^2 n}{4r^2} \right).
$$
We have also used the relations $2r \leq r + R' \leq 2R' \leq r^2$, which
hold because $r \geq 2$ and $R' = r(r-1)/2$.
This calculation also depends on the bound $(1-\alpha)^2 \leq (1-\alpha)$,
which is valid because $\alpha \in (0,1)$.  Finally, note that
$1 - \alpha = 4p(1-p)$.
In view of Remark~\ref{rem:var-suff-rdm},
we arrive at Theorem~\ref{thm:unbalanced-chernoff}.

\subsection{Proof of Claim~\ref{claim:min-eig}}
\label{sec:min-eig}

The argument is expressed most easily in the language of multilinear algebra;
see~\cite[Chap.~I]{Bha97:Matrix-Analysis} for more background.
This approach was inspired by conversations with Richard K\"ung.

Introduce the linear space $V := \R^r \vee \R^r$ of symmetric tensors, equipped with
the real inner product $\ip{\cdot}{\cdot}$.
Let $\vee : \R^r \times \R^r \to V$ be the symmetric bilinear map that constructs
an elementary symmetric tensor $\vct{u} \vee \vct{v} \in V$ from two vectors $\vct{u}, \vct{v} \in \R^r$.
We always have the relation $\vct{u} \vee \vct{v} = \vct{v} \vee \vct{u}$.
The space $V$ admits the orthonormal basis
$\{ \mathbf{e}_i \vee \mathbf{e}_j : 1 \leq i \leq j \leq r \}$,
where the $\mathbf{e}_i$ are the standard basis vectors in $\R^r$.
For linear operators $\mtx{A}, \mtx{B}$ acting on $\R^r$, we can define a linear operator
$\mtx{A} \vee \mtx{B}$ acting on $V$ by the rule
$$
\mtx{A} \vee \mtx{B} : \mathbf{e}_i \vee \mathbf{e}_j \longmapsto (\mtx{A} \mathbf{e}_i) \vee (\mtx{B} \mathbf{e}_j)
\quad\text{for $1 \leq i \leq j \leq r$.}
$$
If $\mtx{A}$ and $\mtx{B}$ are both psd, then $\mtx{A} \vee \mtx{B}$ is a psd operator on $V$.

Consider the subspace $W := \lspan\{ \mathbf{e}_i \vee \mathbf{e}_j : 1 \leq i < j \leq r \}$
of the inner product space $V$.
It is natural to treat the random vector $\vct{y} \in \R^{r(r-1)/2}$ as an element of $W$
by identifying $\ip{ \smash{\vct{y}} }{ \smash{\mathbf{e}_i \vee \mathbf{e}_j} } = y_{ij}$ for $1 \leq i < j \leq r$.
Similarly, $\mtx{\Sigma} = \Expect [ \vct{yy}^\adj ]$
is the linear operator on $W$ given by
$$
\ip{ \mathbf{e}_{i'} \vee \mathbf{e}_{j'} }{ \mtx{\Sigma} (\mathbf{e}_i \vee \mathbf{e}_j) }
	= \Expect \left[ \ip{ \mathbf{e}_{i'} \vee \mathbf{e}_{j'} }{ \vct{y} } \ip{ \vct{y} }{ \mathbf{e}_i \vee \mathbf{e}_j }  \right]
	\qquad\text{where}\qquad
\begin{aligned}
	1 &\leq i < j \leq r; \\
	1 &\leq i' < j' \leq r.
\end{aligned}
$$
Using the definition~\eqref{eqn:y-vec} of $\vct{y}$
and the fact that $\vct{w} \sim \textsc{SBern}(p, r)$, we quickly compute that
$$
\ip{ \mathbf{e}_{i'} \vee \mathbf{e}_{j'} }{ \mtx{\Sigma}  (\mathbf{e}_i \vee \mathbf{e}_j) }
	= \begin{cases} (1 - \alpha)^2 + (1 - \alpha^2), & \text{$i = i'$ and $j = j'$} \\
		(1 - \alpha)^2 + \alpha (1 - \alpha), & \text{$i = i'$ xor $j = j'$} \\
		(1 - \alpha)^2 + \alpha (1 - \alpha), & \text{$i = j'$ xor $j = i'$} \\
		(1 - \alpha)^2, & \text{$i \neq i'$ and $j \neq j'$} \\
	\end{cases}
\qquad\text{where}\qquad
\begin{aligned}
1 &\leq i < j \leq r; \\
1 &\leq i' < j' \leq r.
\end{aligned}
$$
This expression is useful, but it takes more work to expose
the spectral properties of $\mtx{\Sigma}$.

The key idea is to identify the operator $\mtx{\Sigma}$ on $W$
as the restriction of an operator $\check{\mtx{\Sigma}}$ on $V$.
Define
\begin{equation} \label{eqn:sigma-prime}
\check{\mtx{\Sigma}} := (1 - \alpha)^2 \cdot (\Id \vee \Id)
	+ \alpha(1-\alpha) \cdot ( \Id \vee \mathbf{J} + \mathbf{J} \vee \Id)
	+ \tfrac{1}{2}(1 - \alpha)^2 \cdot (\mathbf{J} \vee \mathbf{J}).
\end{equation}
Using the fact that $\mathbf{J} \mathbf{e}_i = \mathbf{1} =  \sum_{k = 1}^r  \mathbf{e}_k$ for each index $1 \leq i \leq r$,
we can check that
$$
\ip{  \mathbf{e}_{i'} \vee \mathbf{e}_{j'} }{ \check{\mtx{\Sigma}} (\mathbf{e}_i \vee \mathbf{e}_j) }
	= \ip{ \mathbf{e}_{i'} \vee \mathbf{e}_{j'} }{ \mtx{\Sigma}( \mathbf{e}_i \vee \mathbf{e}_j) }
	\quad\text{where}\quad
\begin{aligned}
	1 &\leq i < j \leq r; \\
	1 &\leq i' < j' \leq r.
\end{aligned}
$$
As promised, $\mtx{\Sigma}$ is the restriction of $\check{\mtx{\Sigma}}$ to $W$.

The representation~\eqref{eqn:sigma-prime} of the operator $\check{\mtx{\Sigma}}$
allows us to determine its spectrum with ease.
The operators $\Id$ and $\mathbf{J}$ are both psd,
so the operators $\Id \vee \mathbf{J}$ and $\mathbf{J} \vee \Id$
and $\mathbf{J} \vee \mathbf{J}$ are also psd.
Weyl's monotonicity principle implies that
$$
\lambda_{\min}(\check{\mtx{\Sigma}}) \geq \lambda_{\min}\big( (1 - \alpha)^2 \cdot (\Id \vee \Id) \big)
	= (1 - \alpha)^2.
$$
The first inequality depends on the fact that $\alpha \in [0, 1]$,
and the second relation holds because $\Id \vee \Id$ is the identity operator on $V$.
Finally, the operator $\mtx{\Sigma}$ is a restriction of $\check{\mtx{\Sigma}}$,
so we conclude that
$$
\lambda_{\min}( \mtx{\Sigma} ) \geq \lambda_{\min}( \check{\mtx{\Sigma}} )
	\geq (1 - \alpha)^2.
$$
This establishes Claim~\ref{claim:min-eig}.

\section{Method 2: Oliveira's Lower Tail Inequality}
\label{sec:oliveira}

This section contains the proof of Theorem~\ref{thm:unbalanced}.
The argument depends on a recent matrix concentration inequality
due to Oliveira~\cite{Oli16:Lower-Tail}.

\subsection{Tools}

We begin with a summary of the technical tools that we require.
The key result is a specialization of Oliveira's lower tail
inequality~\cite[Thm.~1.1]{Oli16:Lower-Tail}.

\begin{fact}[Oliveira] \label{fact:oliveira}
Consider a random vector $\vct{x} \in \R^d$
whose second-moment matrix $\Expect[ \vct{xx}^\adj]$ is nonsingular.
Compute the hypercontractive parameter
\begin{equation} \label{eqn:weak-variance}
h := h(\vct{x}) := \max_{\vct{u} \neq \vct{0}}
	\frac{\Expect (\vct{x}^\adj \vct{u})^4}{\big[ \Expect (\vct{x}^\adj \vct{u})^2 \big]^2}.
\end{equation}
Draw iid copies $\vct{x}_1, \dots, \mtx{x}_s$ of the random vector $\vct{x}$.
Then
$$
\Prob{ \lspan\{ \vct{x}_1, \dots, \vct{x}_s \} \neq \R^d }
	\leq 2 \exp\left( \frac{d}{2} - \frac{s}{162 h} \right).
$$
\end{fact}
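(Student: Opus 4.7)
The plan is to reduce the spanning event to a lower-tail estimate on the smallest eigenvalue of $\mtx{A} := \sum_{i=1}^s \vct{x}_i \vct{x}_i^\adj$, establish a one-dimensional Paley--Zygmund-style bound driven by the hypercontractive parameter $h$, and then promote this pointwise estimate to a uniform bound over the sphere via a PAC-Bayesian / Gaussian smoothing argument.

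First, I would whiten: setting $\mtx{\Sigma} := \Expect \vct{xx}^\adj$ and replacing $\vct{x}$ by $\mtx{\Sigma}^{-1/2}\vct{x}$ leaves both the event $\{\lspan\{\vct{x}_1,\dots,\vct{x}_s\} \neq \R^d\}$ and the hypercontractive parameter $h$ invariant (the ratio in~\eqref{eqn:weak-variance} is untouched under the reparametrization $\vct{u} \mapsto \mtx{\Sigma}^{-1/2} \vct{u}$). So I may assume $\Expect \vct{xx}^\adj = \Id$, in which case the failure event is $\{\lambda_{\min}(\mtx{A}) = 0\}$ and it suffices to bound $\Prob{ \lambda_{\min}(\mtx{A}) \leq \theta s }$ for a chosen $\theta \in (0,1)$. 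For a fixed unit vector $\vct{u}$, the nonnegative random variable $Z := (\vct{x}^\adj \vct{u})^2$ satisfies $\Expect Z = 1$ and $\Expect Z^2 \leq h$. The elementary pointwise bound $\exp(-\lambda z) \leq 1 - \lambda z + \tfrac{1}{2}\lambda^2 z^2$ for $z,\lambda \geq 0$ yields the one-sided MGF estimate $\Expect \exp(-\lambda Z) \leq \exp(-\lambda + \tfrac{1}{2}\lambda^2 h)$. Multiplying over the $s$ independent copies and applying Markov (optimized in $\lambda$) gives the single-direction lower tail
\begin{equation*}
\Prob{ \vct{u}^\adj \mtx{A} \vct{u} \leq \theta s } \;\leq\; \exp\!\left(-\tfrac{(1-\theta)^2 s}{2h}\right), \qquad \theta \in (0,1).
\end{equation*}

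The main obstacle is converting this pointwise estimate into a bound that holds simultaneously over every unit $\vct{u}$ while paying only an $\exp(d/2)$ overhead; a naive $\eps$-net would incur the cruder factor $(C/\eps)^d$ and additionally demand a separate upper-tail estimate on $\norm{\mtx{A}}$. Following Oliveira, I would replace the net with a PAC-Bayesian device: let $\vct{u}^\star$ be the minimizing direction of $\vct{u} \mapsto \vct{u}^\adj \mtx{A} \vct{u}$, choose a Gaussian ``prior'' $P = \mathcal{N}(\vct{0}, \sigma^2 \Id)$ and ``posterior'' $Q = \mathcal{N}(\vct{u}^\star, \sigma^2 \Id)$, and apply Donsker--Varadhan duality with $\mathrm{KL}(Q \| P) = 1/(2\sigma^2)$. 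Choosing $\sigma^2 \asymp 1/d$ puts the typical $\vct{u} \sim P$ on the unit scale and converts the KL term into the additive $d/2$ in the final exponent; averaging the Chernoff bound above under $P$ produces the $-s/h$ contribution (now integrated against the $\chi^2_d$ behavior of $\norm{\vct{u}}^2$, which $h$ still governs because the fourth moment bound holds in every direction). Jointly balancing $\lambda$, $\theta$, and $\sigma$ yields a bound of the claimed form, and squeezing the explicit constant $162$ out of this optimization is the delicate quantitative step. The structural ingredients---whitening and a fourth-moment one-sided Chernoff---are elementary; the work lives entirely in the aggregation step.
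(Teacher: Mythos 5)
The paper does not prove this statement: it is quoted verbatim (as a ``Fact'') from Oliveira~\cite[Thm.~1.1]{Oli16:Lower-Tail}, specialized by setting the lower-tail threshold to zero so that $\lambda_{\min}\big(\sum_i \vct{x}_i\vct{x}_i^\adj\big)=0$ is exactly the failure of the $\vct{x}_i$ to span $\R^d$. So there is no in-paper argument to compare against, and what you have written is best read as a reconstruction of Oliveira's own proof.

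As such a reconstruction, the skeleton is right: whitening leaves both the spanning event and the hypercontractive ratio $h$ invariant; the pointwise bound
\begin{equation*}
\Expect\exp\bigl(-\lambda(\vct{x}^\adj\vct{u})^2\bigr)\;\leq\; 1-\lambda+\tfrac12\lambda^2 h \;\leq\; \exp\bigl(-\lambda+\tfrac12\lambda^2 h\bigr)
\end{equation*}
for a unit vector $\vct{u}$ is a correct use of the fourth-moment assumption, and it does give the single-direction rate $\exp\bigl(-(1-\theta)^2 s/(2h)\bigr)$; and the PAC--Bayes device with Gaussian prior $P=\mathcal{N}(\vct{0},\sigma^2\Id)$, posterior $Q=\mathcal{N}(\vct{u}^\star,\sigma^2\Id)$, KL term $1/(2\sigma^2)$, and $\sigma^2\asymp 1/d$ is indeed the mechanism that yields the $d/2$ in the exponent in place of an $\eps$-net's $d\log(C/\eps)$. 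These are exactly Oliveira's ingredients.

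The aggregation step, however, is waved at rather than carried out, and there is a concrete issue hiding inside it. You do not ``average the Chernoff bound under $P$''; the Donsker--Varadhan inequality is applied to the random functional $g(\vct{u}) = -\lambda\sum_i(\vct{x}_i^\adj\vct{u})^2 - s\log \Expect_{\vct{x}}\exp(-\lambda(\vct{x}^\adj\vct{u})^2)$, whose prior-average exponential has expectation one by Fubini. When you then pass to the posterior $Q=\mathcal{N}(\vct{u}^\star,\sigma^2\Id)$, the left-hand side becomes
\begin{equation*}
\Expect_{\vct{u}\sim Q}\Bigl[\lambda\,\vct{u}^\adj\mtx{A}\vct{u}\Bigr] \;=\; \lambda\bigl[(\vct{u}^\star)^\adj\mtx{A}\vct{u}^\star + \sigma^2\trace(\mtx{A})\bigr],
\end{equation*}
and the extra $\sigma^2\trace(\mtx{A})$ term has the wrong sign for the lower bound you want. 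Contrary to the remark that PAC--Bayes spares you any upper control on $\mtx{A}$, this term must be reckoned with; Oliveira's proof does not get it for free, and handling it (together with the moments $\Expect_{\vct{u}\sim Q}\norm{\vct{u}}^4$ that feed into $-\log M_\lambda$) is precisely why the optimization in $\lambda$, $\theta$, $\sigma$ is ``delicate'' and why the constant lands at $162$ rather than something tidier. If you want to present this as a self-contained proof rather than a pointer to Oliveira, you need to either spell out how $\trace(\mtx{A})$ is absorbed or controlled, or replace the last paragraph with a direct citation of~\cite[Thm.~1.1]{Oli16:Lower-Tail} and only record the elementary specialization to threshold zero, which is all the paper itself does.
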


To bound the parameter $h$ that appears in the last result,
we need the following hypercontractive inequality.
For example, see~\cite[Thm.~10.21]{ODo14:Analysis-Boolean}.

\begin{fact}[Hypercontractivity] \label{fact:hyper}
Consider a polynomial $q : \{ \pm 1 \}^n \to \R$ with real coefficients and degree $k$.
Draw a random vector $\vct{\xi} \sim \textsc{SBern}(p,n)$ where $p \in (0,1)$. Then the random variable $X : = q(\vct{\xi})$ satisfies
$$
\frac{\Expect X^4}{\big(\Expect X^2\big)^{2}}
	\leq \left[ \frac{9}{p(1-p)} \right]^{k}. $$
\end{fact}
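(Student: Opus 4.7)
The plan is to prove Fact~\ref{fact:hyper} by the standard Bonami--Beckner route: (i) set up Fourier analysis on the biased hypercube, (ii) establish a one-variable $(2,4)$ hypercontractive inequality for the biased noise operator, (iii) tensorize to $n$ variables, and (iv) translate the resulting noise-operator bound into the degree-$k$ statement. To set up, put $\mu := 2p - 1$ and $\sigma := 2\sqrt{p(1-p)}$, and define the characters $\chi_S(\vct{\xi}) := \prod_{i \in S}(\xi_i - \mu)/\sigma$ for $S \subseteq \{1,\dots,n\}$. The $\chi_S$ form an orthonormal basis of $L^2(\{\pm 1\}^n, \textsc{SBern}(p,n))$, a polynomial $q$ expands uniquely as $q = \sum_S \hat{q}(S)\,\chi_S$, and ``degree at most $k$'' is equivalent to $\hat{q}(S) = 0$ whenever $|S| > k$. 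The noise operator $T_\rho$ acts by $T_\rho \chi_S = \rho^{|S|} \chi_S$.

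The main obstacle is the one-variable two-point inequality: for every $f : \{\pm 1\} \to \R$ and every $\rho$ with $\rho^{4} \leq p(1-p)/9$, and with $\xi \sim \textsc{SBern}(p)$,
\begin{equation*}
\Expect\bigl[ (T_\rho f(\xi))^{4} \bigr] \;\leq\; \bigl( \Expect[ f(\xi)^{2} ] \bigr)^{2}.
\end{equation*}
Writing $f = a + b\,\chi_{\{1\}}$, so that $T_\rho f = a + \rho b\,\chi_{\{1\}}$, both sides become explicit polynomials in $a, b, \rho, p$; the right-hand side equals $(a^{2} + b^{2})^{2}$ by orthonormality. I would verify the resulting two-term inequality by homogenizing to $a^{2} + b^{2} = 1$, parametrizing $(a,b)$ by a single angle, and optimizing over it. The factor $9$ in Fact~\ref{fact:hyper} is exactly what emerges from squeezing this one-variable estimate in the biased regime; the analogous sharp constant is tighter in the symmetric case but degrades to this form when $p(1-p)$ is allowed to shrink.

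Given the two-point inequality, tensorization is routine: the noise operator factors as $T_\rho = T_\rho^{(1)} \otimes \dots \otimes T_\rho^{(n)}$ across the product measure, and a standard induction on $n$ using Minkowski's integral inequality (conditioning on all coordinates except one at each step) upgrades the one-dimensional bound to $\|T_\rho g\|_{4} \leq \|g\|_{2}$ for every $g : \{\pm 1\}^n \to \R$ with the same $\rho$. To close out, set $X := q(\vct{\xi})$ and define $g := \sum_S \rho^{-|S|}\,\hat{q}(S)\,\chi_S$, so that $T_\rho g = q$ pointwise. The tensorized bound then gives
\begin{equation*}
(\Expect X^{4})^{1/4} \;=\; \|T_\rho g\|_{4} \;\leq\; \|g\|_{2} \;=\; \biggl( \sum_{|S| \leq k} \rho^{-2|S|}\,\hat{q}(S)^{2} \biggr)^{1/2} \;\leq\; \rho^{-k}\,\|q\|_{2} \;=\; \rho^{-k}(\Expect X^{2})^{1/2},
\end{equation*}
where the penultimate inequality uses $\rho < 1$ and $|S| \leq k$. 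Choosing $\rho$ so that $\rho^{4} = p(1-p)/9$ and raising to the fourth power yields $\Expect X^{4} / (\Expect X^{2})^{2} \leq [9/(p(1-p))]^{k}$, as claimed. The bulk of the difficulty is concentrated in the two-point step; the tensorization and the final deduction are mechanical.
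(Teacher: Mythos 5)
The paper does not prove this Fact; it cites it to O'Donnell's textbook (Thm.~10.21). Your sketch reconstructs the result by the standard Bonami--Beckner route, and it is sound. The one elided step---the two-point $(2,4)$-inequality with $\rho^{4} = p(1-p)/9$---does check out: writing $t = \sqrt{p(1-p)} \in (0, 1/2]$ and using $\Expect \chi^3 = (1-2p)/t$, $\Expect \chi^4 = (1-3t^2)/t^2$, the inequality is a quadratic in $a$ (with $b$ normalized to $1$) whose leading coefficient is $2-2t > 0$ and whose discriminant condition simplifies to $4t(1-4t^{2}) \leq 6(1-t)(8+3t^{2})$, equivalently $48 - 52t + 18t^{2} - 2t^{3} \geq 0$, which is clear on $(0, 1/2]$ since the cubic is decreasing there and is positive at $t = 1/2$. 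The tensorization by Minkowski and the final degree-$k$ deduction are then exactly as you describe. One small remark: the classical textbook parametrization of biased hypercontractivity usually uses $\lambda = \min(p, 1-p)$ rather than $p(1-p)$; since these agree up to a factor of $2$, your $p(1-p)$ form reproduces the constant in the Fact exactly and is a perfectly serviceable variant. In short, your proposal is a correct self-contained proof of a result the paper treats as a black box.
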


\subsection{Proof of Theorem~\ref{thm:unbalanced}}
\label{sec:pf-balanced}

As usual, assume that $r \geq 2$.
We also instate the notation from Proposition~\ref{prop:suff-rdm}.
Recall that the random vector $\vct{w} \sim \textsc{SBern}(p,r)$.
It is easy to verify that the second-moment matrix $\Expect[ \vct{ww}^\adj ]$
is nonsingular; see~\eqref{eqn:Mr}.
The entries of the random vector $\vct{z} \in \R^R$ are derived from $\vct{w}$
by means of the formula~\eqref{eqn:z-vec}.
The second-moment matrix $\Expect[ \vct{zz}^\adj ]$ is also nonsingular;
indeed, for fixed $\vct{u} \in \R^R$, the random variable $(\vct{z}^\adj \vct{u})^2$
is identically zero only if $\vct{u} = \vct{0}$.

Let us begin with the probability that $n$ iid copies $\vct{w}_1, \dots, \vct{w}_n$
of the random vector $\vct{w} \in \R^r$ span all of $\R^r$.
We will use Oliveira's result, Fact~\ref{fact:oliveira}, to establish that
\begin{equation} \label{eqn:w-bd-oliveira}
\Prob{ \lspan\{\vct{w}_1, \dots, \vct{w}_n \} \neq \R^r }
	\leq 2 \exp\left( \frac{r}{2} - \frac{p(1-p) n}{1458} \right).
\end{equation}
Observe that, for any vector $\vct{u} \in \R^r$, the linear form
$\vct{w}^\adj \vct{u} = \sum_{i=1}^r w_i u_i$
is a polynomial of degree one in the entries of $\vct{w}$.
Fact~\ref{fact:hyper} implies that
$$
\frac{\Expect (\vct{w}^\adj \vct{u})^4}{\big[\Expect (\vct{w}^\adj \vct{u}^2)\big]^2}
	\leq \frac{9}{p(1-p)}.
$$
Therefore, the hypercontractive parameter $h(\vct{w}) \leq 9 p^{-1} (1-p)^{-1}$.
The claim~\eqref{eqn:w-bd-oliveira} now follows from Fact~\ref{fact:oliveira}.

Second, we study the probability that iid copies $\vct{z}_1, \dots, \vct{z}_n$
of the random vector $\vct{z} \in \R^R$ span all of $\R^R$.
We will apply Fact~\ref{fact:oliveira} to obtain
\begin{equation} \label{eqn:z-bd-oliveira}
\Prob{ \lspan\{\vct{z}_1, \dots, \vct{z}_n\} \neq \R^R }
	\leq 2 \exp\left( \frac{R}{2} - \frac{p^2(1-p)^2 n}{13122} \right).
\end{equation}
For any vector $\vct{u} \in \R^R$, we can express
$\vct{z}^\adj \vct{u} = u_0 + \sum_{i < j} w_i w_j u_{ij}$.
This is a polynomial of degree two in the entries of $\vct{w}$.
Fact~\ref{fact:hyper} implies that
$$
\frac{\Expect (\vct{z}^\adj \vct{u})^4}{\big[ \Expect(\vct{z}^\adj \vct{u})^2 \big]^2}
	\leq \left[ \frac{9}{p(1-p)} \right]^{2}.
$$
Therefore, the parameter $h(\vct{z}) \leq 81 p^{-2} (1-p)^{-2}$,
and the claim~\eqref{eqn:z-bd-oliveira} follows from Fact~\ref{fact:oliveira}.

To complete the argument,
combine the inequalities~\eqref{eqn:w-bd-oliveira} and~\eqref{eqn:z-bd-oliveira}
to arrive at the estimate
$$
\Prob{ \lspan\{\vct{w}_1, \dots, \vct{w}_n \} \neq \R^r }
	+ \Prob{ \lspan\{\vct{z}_1, \dots, \vct{z}_n\} \neq \R^R }
	\leq 4 \exp\left( \frac{r^2}{4} - \frac{p^2(1-p)^2 n}{13122} \right).
$$
We have used the bounds $r \leq R \leq r^2/2$,
which are valid because $r \geq 2$ and $R = 1 + r(r-1)/2$.
Introduce this inequality into Proposition~\ref{prop:suff-rdm}
to complete the proof of Theorem~\ref{thm:unbalanced}.

\begin{remark}[Alternative Proof]
Let us mention an alternative approach to Theorem~\ref{thm:unbalanced} based
on Mendelson's Small Ball Method~\cite{KM15:Bounding-Smallest,Tro15:Convex-Recovery}.
This technique yields a comparable outcome, but it takes more steps to apply.
\end{remark}

\section*{Acknowledgments}

The author thanks Richard K{\"u}ng and Benjamin Recht for helpful conversations related to this work.
This research was partially supported by
ONR award N00014-11-1002 and the Gordon \& Betty Moore Foundation.

\bibliographystyle{myalpha}

\begin{thebibliography}{BTN01}

\bibitem[AW02]{AW02:Strong-Converse}
R.~Ahlswede and A.~Winter.
\newblock Strong converse for identification via quantum channels.
\newblock {\em IEEE Trans. Inform. Theory}, 48(3):569--579, 2002.

\bibitem[Bha97]{Bha97:Matrix-Analysis}
R.~Bhatia.
\newblock {\em Matrix analysis}.
\newblock Springer-Verlag, New York, 1997.

\bibitem[BTN01]{BN01:Lectures-Modern}
A.~Ben-Tal and A.~Nemirovski.
\newblock {\em Lectures on modern convex optimization}.
\newblock MPS/SIAM Series on Optimization. Society for Industrial and Applied
  Mathematics (SIAM), Philadelphia, PA; Mathematical Programming Society (MPS),
  Philadelphia, PA, 2001.
\newblock Analysis, algorithms, and engineering applications.

\bibitem[DP93]{DP93:Performance-Eigenvalue}
C.~Delorme and S.~Poljak.
\newblock The performance of an eigenvalue bound on the max-cut problem in some
  classes of graphs.
\newblock {\em Discrete Math.}, 111(1-3):145--156, 1993.
\newblock Graph theory and combinatorics (Marseille-Luminy, 1990).

\bibitem[Gro53]{Gro53:Resume-Theorie}
A.~Grothendieck.
\newblock R\'esum\'e de la th\'eorie m\'etrique des produits tensoriels
  topologiques.
\newblock {\em Bol. Soc. Mat. S\~ao Paulo}, 8:1--79, 1953.

\bibitem[GW95]{GW95:Improved-Approximation}
M.~X. Goemans and D.~P. Williamson.
\newblock Improved approximation algorithms for maximum cut and satisfiability
  problems using semidefinite programming.
\newblock {\em J. Assoc. Comput. Mach.}, 42(6):1115--1145, 1995.

\bibitem[HUL01]{HL01:Fundamentals-Convex}
J.-B. Hiriart-Urruty and C.~Lemar{\'e}chal.
\newblock {\em Fundamentals of convex analysis}.
\newblock Grundlehren Text Editions. Springer-Verlag, Berlin, 2001.
\newblock Abridged version of {{\i}t Convex analysis and minimization
  algorithms. I} [Springer, Berlin, 1993; MR1261420 (95m:90001)] and {{\i}t II}
  [ibid.; MR1295240 (95m:90002)].

\bibitem[Kar72]{Kar72:Reducibility-Combinatorial}
R.~M. Karp.
\newblock Reducibility among combinatorial problems.
\newblock In {\em Complexity of computer computations ({P}roc. {S}ympos., {IBM}
  {T}homas {J}. {W}atson {R}es. {C}enter, {Y}orktown {H}eights, {N}.{Y}.,
  1972)}, pages 85--103. Plenum, New York, 1972.

\bibitem[KM15]{KM15:Bounding-Smallest}
V.~Koltchinskii and S.~Mendelson.
\newblock Bounding the smallest singular value of a random matrix without
  concentration.
\newblock {\em Int. Math. Res. Not. IMRN}, (23):12991--13008, 2015.

\bibitem[KN12]{KN12:Grothendieck-Type-Inequalities}
S.~Khot and A.~Naor.
\newblock Grothendieck-type inequalities in combinatorial optimization.
\newblock {\em Comm. Pure Appl. Math.}, 65(7):992--1035, 2012.

\bibitem[LP95]{LP95:Positive-Semidefinite-Relaxation}
M.~Laurent and S.~Poljak.
\newblock On a positive semidefinite relaxation of the cut polytope.
\newblock {\em Linear Algebra Appl.}, 223/224:439--461, 1995.
\newblock Special issue honoring Miroslav Fiedler and Vlastimil Pt{\'a}k.

\bibitem[LP96]{LP96:Facial-Structure}
M.~Laurent and S.~Poljak.
\newblock On the facial structure of the set of correlation matrices.
\newblock {\em SIAM J. Matrix Anal. Appl.}, 17(3):530--547, 1996.

\bibitem[MT11]{MT11:Two-Proposals}
M.~McCoy and J.~A. Tropp.
\newblock Two proposals for robust {PCA} using semidefinite programming.
\newblock {\em Electron. J. Stat.}, 5:1123--1160, 2011.

\bibitem[Nes98]{Nes98:Semidefinite-Relaxation}
Y.~Nesterov.
\newblock Semidefinite relaxation and nonconvex quadratic optimization.
\newblock {\em Optim. Methods Softw.}, 9(1-3):141--160, 1998.

\bibitem[O'D14]{ODo14:Analysis-Boolean}
R.~O'Donnell.
\newblock {\em Analysis of {B}oolean functions}.
\newblock Cambridge University Press, New York, 2014.

\bibitem[Oli16]{Oli16:Lower-Tail}
R.~I. Oliveira.
\newblock The lower tail of random quadratic forms with applications to
  ordinary least squares.
\newblock {\em Probab. Theory Related Fields}, 166(3-4):1175--1194, 2016.

\bibitem[Pis12]{Pis12:Grothendiecks-Theorem}
G.~Pisier.
\newblock Grothendieck's theorem, past and present.
\newblock {\em Bull. Amer. Math. Soc. (N.S.)}, 49(2):237--323, 2012.

\bibitem[PR95]{PR95:Solving-Max-Cut}
S.~Poljak and F.~Rendl.
\newblock Solving the max-cut problem using eigenvalues.
\newblock {\em Discrete Appl. Math.}, 62(1-3):249--278, 1995.
\newblock Partitioning and decomposition in combinatorial optimization.

\bibitem[Roc70]{Roc70:Convex-Analysis}
R.~T. Rockafellar.
\newblock {\em Convex analysis}.
\newblock Princeton Mathematical Series, No. 28. Princeton University Press,
  Princeton, N.J., 1970.

\bibitem[Tro12]{Tro12:User-Friendly}
J.~A. Tropp.
\newblock User-friendly tail bounds for sums of random matrices.
\newblock {\em Found. Comput. Math.}, 12(4):389--434, 2012.

\bibitem[Tro15a]{Tro15:Convex-Recovery}
J.~A. Tropp.
\newblock Convex recovery of a structured signal from independent random linear
  measurements.
\newblock In {\em Sampling theory, a renaissance}, Appl. Numer. Harmon. Anal.,
  pages 67--101. Birkh\"auser/Springer, Cham, 2015.

\bibitem[Tro15b]{Tro15:Introduction-Matrix}
J.~A. Tropp.
\newblock An introduction to matrix concentration inequalities.
\newblock {\em Foundations and Trends in Machine Learning}, 8(1-2):1--230,
  2015.

\end{thebibliography}

\end{document}